\newcommand{\leqnomode}{\tagsleft@true}
\newcommand{\reqnomode}{\tagsleft@false}
\theoremstyle{plain}
\newtheorem{theorem}{Theorem}[section]
\newtheorem{observation}[theorem]{Observation}
\newtheorem{conjecture}[theorem]{Conjecture}
\newtheorem{remark}[theorem]{Remark}
\def\cl{\mathit{cl}}
\def\lk{\mathit{lk}}
\begin{document}
\title{Stable sets in flag spheres}

%% define your authors in the usual way
%% use \addressmark{1}, \addressmark{2} etc for the institutions, and use \thanks{} for contact details

%\author{Maria Chudnovsky \thanks{\href{mailto:mchudnov@math.princeton.edu}{mchudnov@math.princeton.edu}. Maria Chudnovsky was partially supported by Grant DMS-2120644 and by ISF grant 2480/20.}\addressmark{1}, \and Eran Nevo \thanks{\href{mailto:nevo@math.huji.ac.il}{nevo@math.huji.ac.il}. Eran Nevo was partially supported by the Israel Science Foundation grant ISF-2480/20 and by ISF-BSF joint grant 2016288.}\addressmark{2}}
%
%%% then use \addressmark to match authors to institutions here
%\address{\addressmark{1}Princeton University, Princeton, NJ 08544, USA \\ \addressmark{2}Einstein Institute of Mathematics, Hebrew University, Jerusalem, Israel}
\author{Maria Chudnovsky\thanks{Partially supported by  NSF-EPSRC Grant DMS-2120644 and by ISF grant 2480/20.}\\
Princeton University, Princeton, NJ 08544, USA
\\
\\
Eran Nevo\thanks{Partially supported by the Israel Science Foundation grant ISF-2480/20 and by ISF-BSF joint grant 2016288.}\\
Hebrew University, Jerusalem, Israel}

%% put the date of submission here
%\received{\today}
\date {
%July 16, 2021; revised
\today}

%\author{Maria Chudnovsky\thanks{\href{mailto:hello@world.c}{hello@world.c}. Partially supported by  NSF-EPSRC Grant DMS-2120644 and by ISF grant 2480/20.}\addressmark{1}, \and Eran Nevo\thanks{\href{mailto:hello@world.c}{hello@world.c}. Partially supported by the Israel Science Foundation grant ISF-2480/20 and by ISF-BSF joint grant 2016288.}\addressmark{2}}

%% then use \addressmark to match authors to institutions here
%\address{\addressmark{1}Princeton University, Princeton, NJ 08544, USA \\ \addressmark{2}Hebrew University, Jerusalem, Israel}

%% put the date of submission here
%\received{\today}

%\author{Maria Chudnovsky\thanks{Partially supported by  NSF-EPSRC Grant DMS-2120644 and by ISF grant 2480/20.}\\
%	Princeton University, Princeton, NJ 08544, USA
%	\\
%	\\
%	Eran Nevo\thanks{Partially supported by the Israel Science Foundation grant ISF-2480/20 and by ISF-BSF joint grant 2016288.}\\
%	Hebrew University, Jerusalem, Israel}
%
%\date {July 16, 2021; revised \today}
\maketitle
\begin{abstract}
%\abstract{
We provide lower and upper bounds on the minimum size of a maximum stable set over  graphs of flag spheres, as a function of the dimension of the sphere and the number of vertices.
%As an application of the lower bound,
Further, we use stable sets
to obtain an improved Lower Bound Theorem for the face numbers of flag spheres.
%}
\end{abstract}

\section{Introduction}
Given a graph $G$, a set $X \subseteq V(G)$ is {\em stable} (or
{\em independent}) if no edge of $G$ has both ends in $X$.
We denote by $\alpha(G)$ the size of a largest stable set in $G$;
a stable set of size $\alpha(G)$ is called a {\em maximum stable
  set} of $G$.
Stable sets are a basic concept in graph theory, but it is in general
very difficult to understand what the structure of maximum stable sets is
(this is related to the fact that the problem of computing $\alpha(G)$ is NP-complete). In this paper we study maximum stable sets in graphs
whose clique complex is topologically a sphere of fixed dimension
(these are called {\em graphs of flag spheres}). These graphs possess
a beautiful recursive structure, since the neighborhood of every vertex is
a graph of the same type but of lower dimension. They are also
of great interest in topological combinatorics and beyond, e.g. in the study of manifolds with nonpositive sectional curvature, via the Charney-Davis conjecture~\cite{Charney-Davis, Gal}.

Our main objective is the following natural invariant: the minimum size over maximum stable sets in $n$-vertex graphs of flag $(d-1)$-dimensional spheres, namely
$$
\alpha(d,n)= \min(\alpha(G):\ |V(G)|=n,\ \text{$\cl(G)$ triangulates the  $(d-1)$-dimensional sphere})
.$$
(Here $\cl(G)$ is the complex of cliques of $G$.)
For fixed $d$ we are interested in the growth of $\alpha(d,n)$ as $n\rightarrow \infty$.

\begin{conjecture}\label{conj:alpha}
For every $d\ge 2$ and $n\ge 2d$,   $\alpha(d,n)=\lceil\frac{n+d-3}{2(d-1)}\rceil$.
%For even $d\ge 2$,  $\alpha(d,n)=
%\lfloor\frac{n}{2(d-1)}\rfloor$.

%For odd $d\geq 3$,
%$\alpha(d,n)=
%\lceil\frac{n}{2(d-1)}\rceil$.
\end{conjecture}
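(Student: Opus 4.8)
We treat the two inequalities separately, and in each case by induction on $d$. The base case $d=2$ is immediate and sharp for \emph{every} example: a flag $1$-sphere is a cycle $C_n$ with $n\ge 4$, and $\alpha(C_n)=\lfloor n/2\rfloor=\lceil\frac{n-1}{2}\rceil=\lceil\frac{n+d-3}{2(d-1)}\rceil$. For $d=3$ the lower bound is essentially free as well: a flag $2$-sphere is a planar triangulation (with no separating triangle), in particular a planar graph, so the Four Colour Theorem gives a stable set of size $\ge n/4=\lceil n/4\rceil$, which matches the formula; for $d=3$ only the extremal construction then requires work.

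\emph{Upper bound (constructions).} The plan is to start from a minimal flag $(d-1)$-sphere with $f_0=2d$ and $\alpha=2$ --- the boundary of the $d$-dimensional cross-polytope $(S^0)^{\ast d}$, or a join $(S^0)^{\ast(d-2)}\ast C_m$ with $m\in\{4,5\}$ --- and to enlarge it one vertex at a time by \emph{stellar subdivision of an edge} $e=\{u,w\}$: this deletes the edge $uw$ and adds a new vertex adjacent exactly to $u$, $w$ and $\lk(e)$. One checks that this operation keeps the complex a flag PL $(d-1)$-sphere (any set that would become a minimal non-face is ruled out because $\lk(e)$ is itself flag), raises $f_0$ by one, and raises $\alpha$ by at most one (a stable set of the new graph either avoids the new vertex, and then uses at most one of the now non-adjacent $u,w$ beyond an old stable set, or contains it, and its neighbourhood lies inside $N[u]\cup N[w]$). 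It therefore suffices to show that, starting from the cross-polytope, one can always subdivide \emph{some} edge without raising $\alpha$ as long as $\alpha$ is strictly below the conjectured value at the current $f_0$; a suitable schedule of such subdivisions then hits every $n\ge 2d$ with $\alpha$ at most the claimed value. (For example, two successive stellar subdivisions of the edges $\{x_1,x_2\}$ and $\{x_1,x_3\}$ of the octahedron keep $\alpha=2$ and reach $f_0=8=3\cdot 3-1$, realising the first ``plateau'' for $d=3$.) Making this edge-choice argument work in general --- i.e.\ controlling $\alpha$ \emph{exactly}, not merely up to a constant, along the subdivision process --- is the crux of this direction.

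\emph{Lower bound.} Here the recursion on $d$ must use the sphere globally, not just one vertex link. The tempting local estimate $\alpha(G)\ge\alpha(\lk(v))\ge\frac{\deg(v)+d-4}{2(d-2)}$ (valid since a stable set of the flag $(d-2)$-sphere $\lk(v)$ is stable in $G$, and by induction) is far too weak, even applied at the maximum-degree vertex, because flag spheres can be very sparse (bounded average degree, e.g.\ iterated edge subdivisions): it loses a constant factor. The plan instead is to pass to balls --- prove the analogous bound $\alpha(B)\ge\frac{f_0(B)+d-3}{2(d-1)}$ for flag $(d-1)$-balls $B$ --- and recurse on them. Given a flag $(d-1)$-sphere $G$, deleting the open star of a vertex $v$ leaves a flag $(d-1)$-ball on $n-1$ vertices with boundary $\lk(v)$; more usefully, one looks for a small separating flag $(d-2)$-subsphere $\Sigma\subseteq G$ splitting $G$ into flag $(d-1)$-balls $B_1,B_2$ with $\partial B_i=\Sigma$, so that $\alpha(G)\ge\alpha(B_i)$ for the larger $B_i$ (which has at least about $n/2$ vertices). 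An alternative, extremal route is to combine the flag $f$-vector inequalities (the improved Lower Bound Theorem obtained in this paper) with clique/facet incidences to bound $n$ from above by a function of $\alpha$ directly. I expect the main obstacle to be precisely this step: quantifying how little $\alpha$ can drop when a vertex, a star, or a ball is removed --- equivalently, ruling out ``sparse, high independence-defect'' flag balls --- which is why, for general $d$, the equality is only conjectured.
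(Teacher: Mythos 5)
This statement is a conjecture in the paper, not a theorem: the authors prove it only for $d=2$ and $d=3$ (the latter via the Four Colour Theorem for the lower bound and the explicit family $W_{3,k}$ plus edge subdivisions for the upper bound), establish the conjectured \emph{upper} bound for $d=4$, and for general $d$ obtain only the much weaker bounds $\frac14 n^{1/(d-2)}\le\alpha(d,n)\lesssim\frac{2n}{3d}$. Your submission is, by your own account, a plan rather than a proof, and the two places where you write ``this is the crux'' are exactly the places where the argument is missing; so it cannot be accepted as a proof of the statement. That said, it is worth recording precisely where your plan meets and departs from what the paper actually does.

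On the upper bound, your observation that a stellar edge subdivision preserves flagness and the sphere property, adds one vertex, and increases $\alpha$ by at most one is correct and is exactly how the paper interpolates between the vertex counts of its explicit constructions. But the paper does not run a greedy ``always find a subdividable edge that keeps $\alpha$ fixed'' argument; it builds a global witness $W_{d,k}$ (a chain of $k$ cross-polytopal layers capped by two apexes) whose $\alpha$ is computed exactly, and it only verifies that $\cl(W_{d,k})$ (suitably modified) is a sphere for $d=3,4$. For $d\ge 5$ even the authors fall back on joins of the $d=4$ construction, which costs a constant factor ($\approx\frac{2n}{3d}$ instead of $\approx\frac{n}{2d}$). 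Your greedy schedule would need to keep $\alpha$ constant for $2(d-1)-1$ consecutive subdivisions starting from $\alpha=2$ at $n=2d$, and certifying that no new large stable set appears after each step is an unsolved combinatorial problem, not a routine verification. On the lower bound, your dismissal of the link recursion is too quick in one sense and too optimistic in another: the paper's best general lower bound \emph{is} the link recursion (either some vertex link is large and one recurses into it, or all degrees are small and a greedy argument wins), and it yields only $n^{1/(d-2)}$ --- so you are right that it cannot give the conjectured linear bound, but your replacement (splitting along a small separating flag $(d-2)$-sphere and proving $\alpha(B)\ge\frac{f_0(B)+d-3}{2(d-1)}$ for flag balls) is not known to be available: such separators need not exist with controlled size, and no tool in the paper or the literature gives a linear-in-$n$ stable set in flag balls either. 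The alternative route via the paper's improved Lower Bound Theorem is circular in spirit, since that theorem is itself deduced from a stable-set lower bound, and in any case an edge count of $(d+\epsilon)n$ only feeds Tur\'an-type bounds of order $n/d$ on $\alpha$, which is what one is trying to prove, not a consequence one can assume.
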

This conjecture holds for $d=2$ (easy) and $d=3$ (see Theorem~\ref{alpha2dim}, using the 4-color theorem (4CT) for the lower bound).
For $d=4$ we prove that the conjectured upper bound holds.
% for
%infinitely many values of $n$.
%(\eran{IMPROVE TO ALL $n$ by edge sudivisions. USE the subdivisions at $a$ and $b$ in the CONSTRUCTION.})
For general $d\ge 4$ we show:
\begin{theorem}\label{tmh:alpha_bounds}
Let $d\ge 4$ and $n\ge 2d$.
Then
%for all $n$ large enough,
$$ \frac{1}{4} n^{\frac{1}{d-2}}\le
\alpha(d,n)\le
%\frac{n}{6\lfloor d/4\rfloor}.$$
\Bigl\lceil\frac{\lceil\frac{n}{\lfloor d/4\rfloor}\rceil +1}{6}\Bigr\rceil.$$
 \end{theorem}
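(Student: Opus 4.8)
The plan is to prove, by induction on $d\ge 3$, that every graph $G$ of a flag $(d-1)$-sphere on $n$ vertices satisfies $\alpha(G)\ge\frac14 n^{1/(d-2)}$; the cases $d\ge 4$ are the asserted bound. The base case $d=3$ says a flag $2$-sphere on $N$ vertices has a stable set of size at least $N/4$, which is the lower bound of Theorem~\ref{alpha2dim} and holds because the graph of a flag $2$-sphere is planar, hence $4$-colourable. For the inductive step, let $\alpha=\alpha(G)$ and let $v$ be a vertex of maximum degree $\Delta$. By the recursive structure of flag spheres recalled in the introduction, the subgraph $G[N(v)]$ induced on the neighbourhood of $v$ is the graph of a flag sphere of dimension $d-2$ and has exactly $\Delta$ vertices; since any stable set of $G[N(v)]$ is a stable set of $G$, the induction hypothesis (in dimension $d-1$; for $d=4$ this is the base case) gives $\alpha\ge\alpha(G[N(v)])\ge\frac14\Delta^{1/(d-3)}$, i.e.\ $\Delta\le(4\alpha)^{d-3}$. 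A maximum stable set $S$ is maximal, so $\{N[u]:u\in S\}$ covers $V(G)$, whence
$$n\le\sum_{u\in S}(1+\deg u)\le\alpha(1+\Delta)\le 2\alpha(4\alpha)^{d-3}=\tfrac{1}{2}(4\alpha)^{d-2}\le(4\alpha)^{d-2},$$
using $1\le\Delta\le(4\alpha)^{d-3}$ in the middle; rearranging gives $\alpha\ge\frac14 n^{1/(d-2)}$. The one point that matters is that $\Delta$ is bounded recursively through the link of $v$; bounding it merely by the number of vertices of an arbitrary flag $(d-2)$-sphere would yield only the weaker exponent $1/(d-1)$.

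\textbf{The upper bound.} The plan is an explicit construction by iterated joins, using two standard facts about the join $G_1*G_2$ of graphs (all edges between $V(G_1)$ and $V(G_2)$ present): the clique complex of $G_1*G_2$ is the join of the clique complexes of $G_1$ and $G_2$, so a join of flag spheres of dimensions $e_1-1$ and $e_2-1$ is a flag sphere of dimension $e_1+e_2-1$; and every stable set of $G_1*G_2$ lies in a single factor, so $\alpha(G_1*G_2)=\max(\alpha(G_1),\alpha(G_2))$. Put $k=\lfloor d/4\rfloor$ and $r=d-4k\in\{0,1,2,3\}$. For each $m\ge 8$ fix a flag $3$-sphere $H_m$ on $m$ vertices with $\alpha(H_m)\le\lceil(m+1)/6\rceil$, which exists since for $d=4$ the paper establishes the conjectured upper bound; and for $r\ge 1$ let $P$ be, for $r=1,2,3$ respectively, the $0$-sphere $S^0$, a $4$-cycle, or the octahedron, a flag $(r-1)$-sphere on $2r$ vertices with $\alpha(P)=2$ (for $r=0$ take no factor $P$). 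Split $n-2r$ into integers $m_1,\ldots,m_k$, each equal to $\lfloor(n-2r)/k\rfloor$ or $\lceil(n-2r)/k\rceil$; the hypothesis $n\ge 2d=8k+2r$ forces every $m_i\ge 8$, and clearly $m_i\le\lceil n/k\rceil$. Then $G:=H_{m_1}*\cdots*H_{m_k}*P$ is a flag sphere of dimension $3k+(r-1)+k=d-1$ on $(n-2r)+2r=n$ vertices, and
$$\alpha(G)=\max\bigl(\max_i\alpha(H_{m_i}),\,\alpha(P)\bigr)\le\max\Bigl(\bigl\lceil\tfrac{\lceil n/k\rceil+1}{6}\bigr\rceil,\,2\Bigr)=\bigl\lceil\tfrac{\lceil n/k\rceil+1}{6}\bigr\rceil,$$
the last equality because $n\ge 2d$ forces $\lceil n/k\rceil\ge 6$; this is the asserted bound on $\alpha(d,n)$.

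\textbf{Where the difficulty lies.} Granting the $d=4$ construction, which is where the genuinely new combinatorics sits, neither half is hard. For the lower bound the only nontrivial step is to see that the maximum degree must be controlled through the statement one dimension lower, anchored at the $4$-colour theorem; the remaining manipulations are routine. For the upper bound the delicate point is merely that the numerology is exactly tight: the padding $P$ uses $2r$ vertices, leaving $n-2r\ge 8k$ vertices to split into $k$ flag $3$-spheres of size at least $8$, and $\lceil n/k\rceil\ge 6$ is precisely what makes the $\alpha(P)=2$ term harmless, all of which is the hypothesis $n\ge 2d$.
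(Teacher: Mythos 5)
Your proof is correct and follows essentially the same route as the paper: the lower bound is the paper's induction on $d$ through vertex links anchored at the four-colour theorem (your single covering inequality $n\le\sum_{u\in S}(1+\deg u)$ just merges the paper's dichotomy between a large link and a greedy selection), and the upper bound is exactly the join-of-flag-$3$-spheres-plus-suspensions construction the paper sketches, with the octahedral $(r-1)$-sphere playing the role of the $r$ extra suspensions.
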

The lower bound slightly improves on the Ramsey bound ($\Omega(n^{\frac{1}{d}})$)
by using the 4CT within the base case $d=4$. The upper bound, which is roughly $\frac{2n}{3d}$ for large $d$,
%(and exactly so for $d$ divisible by $4$),
is obtained by taking the join of copies
of the best flag $3$-spheres constructed in Theorem~\ref{flag3dim} for the upper bound, and taking up to $3$ extra suspensions to reach dimension $d-1$.
Indeed, a maximum stable set in the join is a maximum stable set in a component of the join -- now, ignoring rounding, such component is a 3-sphere on a $4/d$ fraction of the $n$ vertices, and a $1/6$ fraction of its vertices form a maximum stable set.

%A construction realizing the conjectured bound will follow provided we construct a small...

Our second result is an improved lower bound theorem on the number of edges for the class of flag spheres; the proof relies on the existence of a large stable
set in such graphs.
%The basic idea appears in the proof of~\cite[Lem.4.2]{Nevo-Missing}, and
Deducing from this bound lower bounds on the number of higher dimensional $k$-faces appeared in the proof of~\cite[Prop.3.2]{Nevo-Missing}, following the MPW-reduction.
\begin{theorem}\label{thm:flag-LBT-half}
(i) Fix $\delta>0$. There exists $d(\delta)$ such that for all $d\ge d(\delta)$ and $n$ large enough, each $n$-vertex flag $(d-1)$-sphere %$\Delta$
has at least
%$(d+\frac{1}{2}-\delta)n$
$(d+\frac{1-\delta}{2d+1})n$
edges.

(ii) For all $d\geq 6$, and $n$ large enough, each $n$-vertex flag $(d-1)$-sphere %$\Delta$
has at least
%$(d+0.294)n$
$(d+\frac{0.987}{2d+1})n$
edges.
\end{theorem}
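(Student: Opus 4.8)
The plan is to improve, by a term linear in $n$, the classical Lower Bound Theorem (LBT) for spheres, exploiting two features of flag spheres: every edge of a flag complex can be contracted, and (by Theorem~\ref{tmh:alpha_bounds}) every flag sphere contains a large stable set.

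Recall that for an $n$-vertex $(d-1)$-sphere one has $f_1 = g_2 + dn - \binom{d+1}{2}$ with $g_2 = h_2 - h_1 \ge 0$ by the LBT, so Theorem~\ref{thm:flag-LBT-half} is equivalent to the estimate $g_2 \ge \frac{1-\delta}{2d+1}n + \binom{d+1}{2}$, i.e.\ to beating the inequality $g_2\ge 0$ by $\Theta(n/d)$. Since the minimum degree of a flag $(d-1)$-sphere is $2(d-1)$ and this is attained (by vertices whose link is $\partial\Diamond_{d-1}$), no purely local degree count can push $f_1$ above $dn$; the surplus is genuinely global, and I would extract it by contracting edges. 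For a flag complex every edge $e=\{v,w\}$ satisfies the link condition $\lk(v)\cap\lk(w)=\lk(e)$ (because a clique on $\tau\cup\{v,w\}$ is a face whenever $\tau\cup\{v\}$ and $\tau\cup\{w\}$ are), so contracting $e$ again yields a $(d-1)$-sphere; this contraction removes exactly $1+f_0(\lk e)$ edges and one vertex, and $\lk e$, being a flag $(d-3)$-sphere, has at least $2(d-2)$ vertices, so at least $2d-3$ edges disappear.

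Now fix a matching $M$ of $G$ whose closed stars are pairwise disjoint, so that the contractions are independent; contracting all of $M$ and applying the LBT to the resulting $(d-1)$-sphere gives
\[ f_1(G) \;\ge\; d(n-|M|)-\binom{d+1}{2}+\sum_{e\in M}\bigl(1+f_0(\lk e)\bigr) \;\ge\; dn -\binom{d+1}{2}+|M|(d-3). \]
So it suffices to produce such a matching of size $|M|=\Omega(n/d^2)$ (and, when possible, with large links $\lk e$, which pushes the per-edge gain from $d-3$ towards $\deg(v)-d$). This is where the stable set enters: from a large stable set $S$ (Theorem~\ref{tmh:alpha_bounds}) one builds the matching by choosing, for each $v\in S$, a neighbour $w_v$ of large degree inside $\lk(v)$ and then discarding a few edges to make the family spread out. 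When $\Delta(G)$ is bounded (say $O(d)$) a linear such matching is found greedily and the argument is immediate; when some links are large one supplements this with lower bounds for $g_2(\lk v)$ coming from the inductive, lower-dimensional form of the theorem applied to $\lk(v)$, the base case being low-dimensional flag spheres, where $2$-dimensional links are planar and the four colour theorem applies exactly as in the proof of Theorem~\ref{tmh:alpha_bounds}. Balancing these regimes and tracking the sizes of the $\lk e$, $e\in M$, turns the crude constant $d-3$ into $\frac{1-\delta}{2d+1}$, with $d(\delta)$ absorbing the additive errors; this is part~(i), and part~(ii) is the same optimization carried out explicitly for $d\ge 6$, yielding the numerical constant $0.987$.

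The main obstacle is the intermediate regime $d\ll\Delta(G)\ll n$: there the stable set guaranteed by Theorem~\ref{tmh:alpha_bounds} has size only $n^{1/(d-2)}$, too small to feed a linear matching directly, while a single large link contributes only $\Theta(\Delta(G))=o(n)$ to $g_2$. Closing this gap — by showing that in fact many vertices must have large links, or by a more careful simultaneous contraction keyed to $S$, and doing so efficiently enough to retain the constant $\frac1{2d+1}$ uniformly for all large $n$ — is the technical heart of the proof.
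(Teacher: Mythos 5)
Your contraction strategy is a genuinely different route from the paper's, but as you yourself concede in your last paragraph, it is not a complete proof: everything hinges on producing a matching $M$ of size $\Omega(n/d^2)$ whose closed stars are pairwise disjoint, and you have no way to do this when the maximum degree is unbounded. A greedy construction only yields $|M|=\Omega(n/\Delta(G)^2)$, and the stable set you invoke from Theorem~\ref{tmh:alpha_bounds} has size only $\frac14 n^{1/(d-2)}$, which is $o(n/d^2)$ for every fixed $d$ --- so neither ingredient closes the gap, and the ``balancing of regimes'' you gesture at is exactly the missing step, not a routine optimization. (A secondary issue: after the first contraction the complex is no longer flag, so the link condition must be re-verified for the remaining edges of $M$; disjointness of closed stars would handle this, but it makes the required matching even harder to find.)

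The paper sidesteps all of this with two ideas absent from your proposal. First, it argues by contradiction: assuming $f_1<(d+\epsilon)n$ forces average degree below $2(d+\epsilon)$, so Tur\'an's theorem already yields a stable set $I$ with $|I|\ge \frac{n}{2(d+\epsilon)+1}$ --- linear in $n$, with no appeal to Theorem~\ref{tmh:alpha_bounds} and no case split on $\Delta(G)$. Second, instead of contractions it uses rigidity: $g_2=f_1-dn+\binom{d+1}{2}$ is the dimension of the stress space of a generic $d$-embedding, and for each $v\in I$ the Cone Lemma applied to the vertex link (which is $(d-1)$-rigid and, by flagness, not stacked) produces a stress supported on the closed star of $v$ with a nonzero weight on some edge at $v$; since $I$ is stable these stresses are linearly independent, giving $g_2\ge|I|\ge\frac{n}{2(d+\epsilon)+1}$, and solving the resulting quadratic in $\epsilon$ yields both (i) and (ii). If you want to salvage your approach, the essential import is the contradiction/Tur\'an step, which hands you a linear-sized stable set to seed the matching --- but you would still have to control overlaps of closed stars, a difficulty the stress argument never encounters.
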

Note that the Lower Bound Theorem for simplicial spheres~\cite{Barnette:LBT-73, Kalai:LBT} guarantees in (i) for simplicial spheres
%$f_1(\Delta)\geq (d-\delta)n$,
at least $(d-\delta)n$ edges
and Gal's conjecture~\cite{Gal},
which, if true, is tight,
 would imply
%$f_1(\Delta)\geq (2d-3-\delta)n$
at least $(2d-3-\delta)n$ edges
(it does hold for $d\leq 5$).
For $d\ge 6$ the lower bound in Theorem~\ref{thm:flag-LBT-half}(ii) appears to be new.
If Conjecture~\ref{conj:alpha} holds then this lower bound would further improve to at least
%$(d+0.4-\epsilon)n$
$(d+\frac{1}{2d-2})n$
edges,
for all $d\ge 6$,
for
%any fixed $\epsilon >0$ and
large enough $n$.

\textbf{Outline}:
In Section~\ref{sec:construction} we construct low dimensional flag spheres whose maximum independent sets are small, proving Conjecture~\ref{conj:alpha} for $d=3$ and the upper bound there for $d=4$, and deducing both bounds in Theorem~\ref{tmh:alpha_bounds}.
In Section~\ref{sec:f1Turan} we prove Theorem~\ref{thm:flag-LBT-half} by combining stable sets with framework rigidity.
In Section~\ref{sec:MaxStable} we give some results and conjectures regarding the corresponding invariant for the other extreme:
$$
\alpha_M(d,n)= \max(\alpha(G):\ |V(G)|=n,\  \text{$\cl(G)$ triangulates the $(d-1)$-dimensional sphere})
.$$

\section{The construction}\label{sec:construction}
We construct graphs, denoted $W_{d,k}$. First we analyze their $\alpha$, and next we analyze their clique complex. Figure~\ref{Fig-W33}(middle) illustrates $W_{3,3}$.

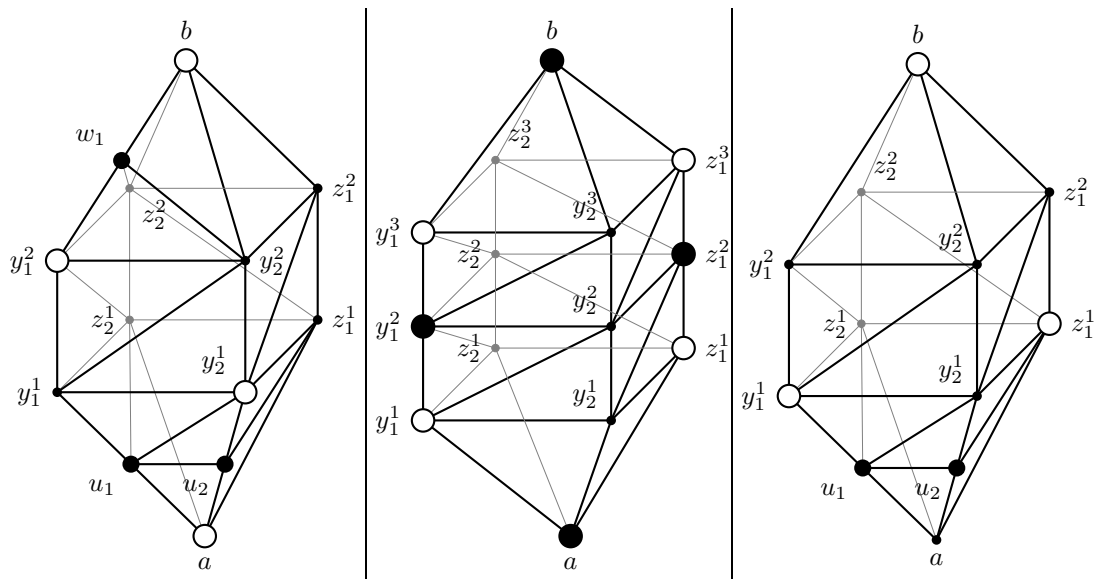
\begin{figure}[H]
\begin{minipage}{0.4\textwidth}
\begin{center}
	\begin{tikzpicture}%
	[
	scale=2.5,
	back/.style={gray, very thin},
	edge/.style={black, thick},
	vertex/.style={inner sep=1pt,circle,draw=black,fill=black,thick,anchor=base},
	wvertex/.style={inner sep=3pt,circle,draw=black,fill=white,thick,anchor=base},
	bvertex/.style={inner sep=3pt,circle,draw=black,fill=black,thick,anchor=base},
	mvertex/.style={inner sep=2pt,circle,draw=black,fill=black,thick,anchor=base},
	]
	
	%% vertex locations
	\coordinate (b) at (-0.43, 1.3, -0.3);
	
	\coordinate (u3) at (-0.715, 0.825, -0.15);
	
	\coordinate (m1) at (-1, 0.35, 0);
	\coordinate (m2) at (0, 0.35, 0);
	\coordinate (m3) at (0, 0.35, -1);
	\coordinate (m4) at (-1, 0.35, -1);
	
	\coordinate (b1) at (-1, -0.35, 0);
	\coordinate (b2) at (0, -0.35, 0);
	\coordinate (b3) at (0, -0.35, -1);
	\coordinate (b4) at (-1, -0.35, -1);
	
	\coordinate (u1) at (-0.55, -0.675, 0.15);
	\coordinate (u2) at (-0.05, -0.675, 0.15);
	
	\coordinate (a) at (-0.1, -1, 0.3);
	
	%% edges before vertices so they are "behind"
	
	%% edges from "b" to top layer
	\draw[edge] (b) -- (m1);
	\draw[edge] (b) -- (m2);
	\draw[edge] (b) -- (m3);
	\draw[edge,back] (b) -- (m4);
	
	%% all back edges
	\draw[edge,back] (m3) -- (m4);
	\draw[edge,back] (m4) -- (m1);
	\draw[edge,back] (m4) -- (b4);
	\draw[edge,back] (b3) -- (m4);
	\draw[edge,back] (b4) -- (m1);
	\draw[edge,back] (b3) -- (b4);
	\draw[edge,back] (b4) -- (b1);
	\draw[edge,back] (a) -- (b4);
	\draw[edge,back] (u1) -- (b4);
	
	%% edges in middle layer
	\draw[edge] (m1) -- (m2);
	\draw[edge] (m2) -- (m3);
	%	\draw[edge,back] (m3) -- (m4);
	%	\draw[edge,back] (m4) -- (m1);
	
	%% edges between middle and bottom layer
	\draw[edge] (m1) -- (b1);
	\draw[edge] (m2) -- (b2);
	\draw[edge] (m3) -- (b3);
	%	\draw[edge,back] (m4) -- (b4);
	
	\draw[edge] (b1) -- (m2);
	\draw[edge] (b2) -- (m3);
	%	\draw[edge,back] (b3) -- (m4);
	%	\draw[edge,back] (b4) -- (m1);
	
	%% edges in bottom layer
	\draw[edge] (b1) -- (b2);
	\draw[edge] (b2) -- (b3);
	%	\draw[edge,back] (b3) -- (b4);
	%	\draw[edge,back] (b4) -- (b1);
	
	%% edges from "a" to bottom layer
	\draw[edge] (a) -- (b1);
	\draw[edge] (a) -- (b2);
	\draw[edge] (a) -- (b3);
	%	\draw[edge,back] (a) -- (b4);
	
	%% edges involving u1, u2, u3
	\draw[edge] (u1) -- (u2);
	\draw[edge] (u1) -- (b2);
	\draw[edge] (u2) -- (b3);
	
	\draw[edge] (u3) -- (m2);
	\draw[edge,back] (u3) -- (m4);	
	
	%% draw vertices
	\node[wvertex,label=above:{$b$}] at (b)     {};
	
	\node[mvertex,label=above left:{$w_1$}] at (u3)     {};
	
	\node[wvertex,label=left:{$y_1^2$}] at (m1)     {};
	\node[vertex,label=right:{$y_2^2$}] at (m2)     {};
	\node[vertex,label=right:{$z_1^2$}] at (m3)     {};
	\node[vertex,label=below right:{$z_2^2$},back] at (m4)     {};
	
	\node[vertex,label=left:{$y_1^1$}] at (b1)     {};
	\node[wvertex,label=above left:{$y_2^1$}] at (b2)     {};
	\node[vertex,label=right:{$z_1^1$}] at (b3)     {};
	\node[vertex,label=left:{$z_2^1$},back] at (b4)     {};
	
	\node[mvertex,label=below left:{$u_1$}] at (u1)     {};
	\node[mvertex,label=below left:{$u_2$}] at (u2)     {};
	
	\node[wvertex,label=below:{$a$}] at (a)     {};
	
	\end{tikzpicture}
\end{center}
\end{minipage}\vline
\begin{minipage}{0.4\textwidth}
\begin{center}
	\begin{tikzpicture}%
	[
	scale=2.5,
	back/.style={gray, very thin},
	edge/.style={black, thick},
	vertex/.style={inner sep=1pt,circle,draw=black,fill=black,thick,anchor=base},
	wvertex/.style={inner sep=3pt,circle,draw=black,fill=white,thick,anchor=base},
	bvertex/.style={inner sep=3pt,circle,draw=black,fill=black,thick,anchor=base},
	]
	
	%% vertex locations
	\coordinate (b) at (-0.43, 1.3, -0.3);
	
	\coordinate (t1) at (-1, 0.5, 0);
	\coordinate (t2) at (0, 0.5, 0);
	\coordinate (t3) at (0, 0.5, -1);
	\coordinate (t4) at (-1, 0.5, -1);
	
	\coordinate (m1) at (-1, 0, 0);
	\coordinate (m2) at (0, 0, 0);
	\coordinate (m3) at (0, 0, -1);
	\coordinate (m4) at (-1, 0, -1);
	
	\coordinate (b1) at (-1, -0.5, 0);
	\coordinate (b2) at (0, -0.5, 0);
	\coordinate (b3) at (0, -0.5, -1);
	\coordinate (b4) at (-1, -0.5, -1);
	
	\coordinate (a) at (-0.1, -1, 0.3);
	
	%% edges before vertices so they are "behind"
	
	%% edges from "b" to top layer
	\draw[edge] (b) -- (t1);
	\draw[edge] (b) -- (t2);
	\draw[edge] (b) -- (t3);
	\draw[edge,back] (b) -- (t4);
	
	%% edges in top layer
	\draw[edge] (t1) -- (t2);
	\draw[edge] (t2) -- (t3);
	\draw[edge,back] (t3) -- (t4);
	\draw[edge,back] (t4) -- (t1);
	
	%% edges between top and middle layer
	\draw[edge] (t1) -- (m1);
	\draw[edge] (t2) -- (m2);
	\draw[edge] (t3) -- (m3);
	\draw[edge,back] (t4) -- (m4);
	
	\draw[edge] (m1) -- (t2);
	\draw[edge] (m2) -- (t3);
	\draw[edge,back] (m3) -- (t4);
	\draw[edge,back] (m4) -- (t1);

	%% edges in middle layer
	\draw[edge] (m1) -- (m2);
	\draw[edge] (m2) -- (m3);
	\draw[edge,back] (m3) -- (m4);
	\draw[edge,back] (m4) -- (m1);
	
	%% edges between middle and bottom layer
	\draw[edge] (m1) -- (b1);
	\draw[edge] (m2) -- (b2);
	\draw[edge] (m3) -- (b3);
	\draw[edge,back] (m4) -- (b4);

	\draw[edge] (b1) -- (m2);
	\draw[edge] (b2) -- (m3);
	\draw[edge,back] (b3) -- (m4);
	\draw[edge,back] (b4) -- (m1);
	
	%% edges in bottom layer
	\draw[edge] (b1) -- (b2);
	\draw[edge] (b2) -- (b3);
	\draw[edge,back] (b3) -- (b4);
	\draw[edge,back] (b4) -- (b1);
	
	%% edges from "a" to bottom layer
	\draw[edge] (a) -- (b1);
	\draw[edge] (a) -- (b2);
	\draw[edge] (a) -- (b3);
	\draw[edge,back] (a) -- (b4);
		
	%% draw vertices
	\node[bvertex,label=above:{$b$}] at (b)     {};
	
	\node[wvertex,label=left:{$y_1^3$}] at (t1)     {};
	\node[vertex,label=above left:{$y_2^3$}] at (t2)     {};
	\node[wvertex,label=right:{$z_1^3$}] at (t3)     {};
	\node[vertex,label=above right:{$z_2^3$},back] at (t4)     {};
	
	\node[bvertex,label=left:{$y_1^2$}] at (m1)     {};
	\node[vertex,label=above left:{$y_2^2$}] at (m2)     {};
	\node[bvertex,label=right:{$z_1^2$}] at (m3)     {};
	\node[vertex,label=left:{$z_2^2$},back] at (m4)     {};
	
	\node[wvertex,label=left:{$y_1^1$}] at (b1)     {};
	\node[vertex,label=above left:{$y_2^1$}] at (b2)     {};
	\node[wvertex,label=right:{$z_1^1$}] at (b3)     {};
	\node[vertex,label=left:{$z_2^1$},back] at (b4)     {};
	
	\node[bvertex,label=below:{$a$}] at (a)     {};
	
	\end{tikzpicture}
\end{center}
\end{minipage}\vline
\begin{minipage}{0.4\textwidth}
\begin{center}
	\begin{tikzpicture}%
	[
	scale=2.5,
	back/.style={gray, very thin},
	edge/.style={black, thick},
	vertex/.style={inner sep=1pt,circle,draw=black,fill=black,thick,anchor=base},
	wvertex/.style={inner sep=3pt,circle,draw=black,fill=white,thick,anchor=base},
	bvertex/.style={inner sep=3pt,circle,draw=black,fill=black,thick,anchor=base},
	mvertex/.style={inner sep=2pt,circle,draw=black,fill=black,thick,anchor=base},
	]
	
	%% vertex locations
	\coordinate (b) at (-0.43, 1.3, -0.3);
	
	\coordinate (m1) at (-1, 0.35, 0);
	\coordinate (m2) at (0, 0.35, 0);
	\coordinate (m3) at (0, 0.35, -1);
	\coordinate (m4) at (-1, 0.35, -1);
	
	\coordinate (b1) at (-1, -0.35, 0);
	\coordinate (b2) at (0, -0.35, 0);
	\coordinate (b3) at (0, -0.35, -1);
	\coordinate (b4) at (-1, -0.35, -1);
	
	\coordinate (u1) at (-0.55, -0.675, 0.15);
	\coordinate (u2) at (-0.05, -0.675, 0.15);
	
	\coordinate (a) at (-0.1, -1, 0.3);
	
	%% edges before vertices so they are "behind"
	
	%% edges from "b" to top layer
	\draw[edge] (b) -- (m1);
	\draw[edge] (b) -- (m2);
	\draw[edge] (b) -- (m3);
	\draw[edge,back] (b) -- (m4);
	
	%% all back edges
	\draw[edge,back] (m3) -- (m4);
	\draw[edge,back] (m4) -- (m1);
	\draw[edge,back] (m4) -- (b4);
	\draw[edge,back] (b3) -- (m4);
	\draw[edge,back] (b4) -- (m1);
	\draw[edge,back] (b3) -- (b4);
	\draw[edge,back] (b4) -- (b1);
	\draw[edge,back] (a) -- (b4);
	\draw[edge,back] (u1) -- (b4);
	
	%% edges in middle layer
	\draw[edge] (m1) -- (m2);
	\draw[edge] (m2) -- (m3);
	%	\draw[edge,back] (m3) -- (m4);
	%	\draw[edge,back] (m4) -- (m1);
	
	%% edges between middle and bottom layer
	\draw[edge] (m1) -- (b1);
	\draw[edge] (m2) -- (b2);
	\draw[edge] (m3) -- (b3);
	%	\draw[edge,back] (m4) -- (b4);
	
	\draw[edge] (b1) -- (m2);
	\draw[edge] (b2) -- (m3);
	%	\draw[edge,back] (b3) -- (m4);
	%	\draw[edge,back] (b4) -- (m1);
	
	%% edges in bottom layer
	\draw[edge] (b1) -- (b2);
	\draw[edge] (b2) -- (b3);
	%	\draw[edge,back] (b3) -- (b4);
	%	\draw[edge,back] (b4) -- (b1);
	
	%% edges from "a" to bottom layer
	\draw[edge] (a) -- (b1);
	\draw[edge] (a) -- (b2);
	\draw[edge] (a) -- (b3);
	%	\draw[edge,back] (a) -- (b4);
	
	%% edges involving u1, u2
	\draw[edge] (u1) -- (u2);
	\draw[edge] (u1) -- (b2);
	\draw[edge] (u2) -- (b3);

	%% draw vertices
	\node[wvertex,label=above:{$b$}] at (b)     {};
	
	\node[vertex,label=left:{$y_1^2$}] at (m1)     {};
	\node[vertex,label=above left:{$y_2^2$}] at (m2)     {};
	\node[vertex,label=right:{$z_1^2$}] at (m3)     {};
	\node[vertex,label=above right:{$z_2^2$},back] at (m4)     {};
	
	\node[wvertex,label=left:{$y_1^1$}] at (b1)     {};
	\node[vertex,label=above left:{$y_2^1$}] at (b2)     {};
	\node[wvertex,label=right:{$z_1^1$}] at (b3)     {};
	\node[vertex,label=left:{$z_2^1$},back] at (b4)     {};
	
	\node[mvertex,label=below left:{$u_1$}] at (u1)     {};
	\node[mvertex,label=below left:{$u_2$}] at (u2)     {};
	
	\node[vertex,label=below:{$a$}] at (a)     {};
	
	\end{tikzpicture}
\end{center}
\end{minipage}
%  \centering
%  \includegraphics[width=\textwidth]{Drawing-W33.pdf}
\caption{\textbf{Middle}: The graph $W_{3,3}$ is depicted. The bold black and bold white vertices indicate stable sets of size $\alpha(W_{3,3})=4$.
The shaded edges indicate
edges that are not
visible from a front view of the depicted realization of the flag $2$-sphere $\cl(W_{3,3})$ in $3$-space.
Similarly,
\textbf{Right}: the graph $X(3,2,2)$ is depicted. The bold white vertices indicate a stable set of size $\alpha(X(3,2,2))=3$;
\textbf{Left}: the graph $Y(3,2,1)$ is depicted. The bold white vertices indicate a stable set of size $\alpha(Y(3,2,1))=4$.
}\label{Fig-W33}
\end{figure}

Fix
%the dimension
an integer $d\geq 2$. For $k \geq 1$ let $W_{d,k}$ be the following
graph.
$V(W_{d,k})=\{a,b\} \cup X_1 \cup \ldots \cup  X_k$ where the sets
$X_1, \dots, X_k,\{a,b\}$ are pairwise disjoint and $|X_i|=2d-2$ for
every
$i \in \{1, \ldots, k\}$.
Denote $X_i=\{y_1 ^i,\ldots,y_{d-1} ^i, z_1 ^{i'},\ldots,z_{d-1} ^i\}$.
Next we list the edges of $W_{d,k}$.
\begin{itemize}
\item  $a$ is complete to $X_1$
and $b$ is complete to $X_k$ and there are no other edges incident
with
$a,b$.
\item  For every $i$, the induced graph $W_{d,k}[X_i]$ is the $1$-skeleton of the
$(d-1)$-dimensional crosspolytope, a.k.a. the graph of the octahedral $(d-2)$-sphere, with non-edges $y_1^iz_1^i, \ldots,
y_{d-1}^iz_{d-1}^i$.
\item $X_i$ is anticomplete to $X_j$ if $|i-j|>1$.
\item For  $i \in \{1, \dots, k-1\}$ and $s,t \in \{1, \dots, d-1\}$
    let us
  say that the pair $(y_s^iz_s^i,y_t^{i+1}z_t^{i+1})$ is   {\em
  positive} if
  $y_s^iy_t^{i+1}$ and $z_s^iz_t^{i+1}$ are edges, and
  $y_s^iz_t^{i+1}$ and
  $z_s^iy_t^{i+1}$ are non-edges, and {\em negative}  if
 $y_s^iy_t^{i+1}$ and $z_s^iz_t^{i+1}$ are non-edges, and
 $y_s^iz_t^{i+1}$ and
  $z_s^iy_t^{i+1}$ are edges. Then the pair
  $(y_s^iz_s^i,y_t^{i+1}z_t^{i+1})$
  is positive if $t \geq s$ and negative if $t<s$.
  \item All pairs of vertices of $W_{d,k}$ that are not mentioned
      above are non-edges.
\end{itemize}

Now we define certain edge subdivisions on $\cl(W_{d,k})$.
Consider a maximal simplex in the link of $a$ (resp. $b$) in $\cl(W_{d,k})$, say $y_1^1y_2^1y_3^1 \dots y_{d-1}^1$ (resp. $y_1^ky_2^ky_3^k \dots y_{d-1}^k$).
Given a simplicial complex $Z$ and an edge $xy$ of $Z$, we denote by
$Z(xy)$ the complex obtained from $Z$ by the stellar subdivision of $Z$ at $xy$ (also called \emph{edge subdivision}),
and by $v_{xy}$ the new vertex resulting from such a subdivision.
Make the following sequence of $2d-2$ edge subdivisions:

$X''(d,k,0):=\cl(W_{d,k})$,
and for $j \in \{1, \dots, d-1\}$, having defined $X''(d,k,j-1)$ and $u_{j-1}$
(for $j>1$),
let $X''(d,k,j):=X''(d,k,j-1)(ay_j^1)$ and  $u_j:=v_{ay_j^1}$.
Let $X(d,k,j)$ be the graph that is the $1$-skeleton of $X''(d,k,j)$ (thus
$X(d,k,0)=W_{d,k}$). For example, Figure~\ref{Fig-W33}(right) illustrates $X(3,2,2)$.

Next let $Y''(d,k,0):=X''(d,k,d-1)$,
and for $j \in \{1, \dots, d-1\}$, having defined $Y''(d,k,j-1)$ and $w_{j-1}$
(for $j>1$),
let $Y''(d,k,j):=Y''(d,k,j-1)(by_j^k)$ and $w_j:=v_{by_j^k}$.
Let $Y(d,k,j)$ be the graph that is the $1$-skeleton of $Y''(d,k,j)$.
For example, Figure~\ref{Fig-W33}(left) illustrates $Y(3,2,1)$.

\begin{theorem}\label{alpha}
  For every $d \geq 2, k \geq 1, d-1\geq j \geq 0$,\\
  $\alpha(X(d,k,j))=k+1= \frac{|V(X(d,k,j))|-2-j}{2d-2}+1$.\\
  For every $d \geq 3, k \geq 1, d-1\geq j \geq 1$,\\
  $\alpha(Y(d,k,j))=k+2= \frac{|V(Y(d,k,j))|-2+(d-1-j)}{2d-2}+1$.
\end{theorem}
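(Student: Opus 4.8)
The plan is to prove the statement in two parts, first for the graphs $X(d,k,j)$ and then for $Y(d,k,j)$, using the first part (via $X''(d,k,d-1)=Y''(d,k,0)$) as the starting point for the second. The arithmetic identities on the right are automatic once we count vertices: $|V(W_{d,k})|=2+k(2d-2)$, each stellar edge subdivision adds one vertex, so $|V(X(d,k,j))|=2+k(2d-2)+j$ and $|V(Y(d,k,j))|=2+k(2d-2)+(d-1)+j$; plugging in gives exactly $k+1$ and $k+2$. So the whole content is the two equalities $\alpha(X(d,k,j))=k+1$ and $\alpha(Y(d,k,j))=k+2$, and each of these splits into a lower bound (exhibit a stable set of that size) and an upper bound (show no larger one exists).

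For the lower bounds, I would exhibit explicit stable sets. For $X(d,k,j)$: take one non-edge pair's worth of vertices spread across the octahedral blocks — concretely, pick $z^i_{d-1}$ from each $X_i$ (these are pairwise non-adjacent since within a block $z_{d-1}^i$ has its unique non-neighbor $y_{d-1}^i$, and the cross-block edges from $z^i_{d-1}z^i_{d-1}$-side to $z^{i+1}_{d-1}z^{i+1}_{d-1}$-side: the pair $(y^i_{d-1}z^i_{d-1},y^{i+1}_{d-1}z^{i+1}_{d-1})$ is positive since $d-1\ge d-1$, so $z^i_{d-1}z^{i+1}_{d-1}$ \emph{is} an edge — so instead alternate: take $z^i$-type vertices whose indices force negativity, or more simply take $a$ together with one vertex of each block not adjacent to $a$). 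The cleanest choice: $\{a\}\cup\{v_i: i=1,\dots,k\}$ where $v_i\in X_i$ is chosen pairwise-non-adjacent and non-adjacent to $a$ (only $X_1$ meets $a$, and $a$ is complete to $X_1$, so $v_1$ must be chosen\ldots) — here one must be slightly careful and I would instead take $b$ plus one vertex from each of $X_1,\dots,X_k$ chosen along a ``decreasing staircase'' $z^i_{s_i}$ with $s_1>s_2>\cdots$ impossible for $k$ large, so the honest construction is: since each block is an octahedron ($(d-2)$-sphere) its graph has independence number $1$ only for the non-edge pairs — wait, octahedron independence number is just the size of a pair of antipodes is not stable; independent sets in the crosspolytope $1$-skeleton are exactly subsets of $\{y^i_t,z^i_t\}$ for a \emph{fixed} $t$, of size $\le 1$ unless... no: $\{y^i_1,z^i_1\}$ is a non-edge, so $\alpha$ of one block is $2$? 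No — $y^i_1z^i_1$ is a non-edge, so $\{y^i_1,z^i_1\}$ \emph{is} stable, but adding any third vertex fails. So each block contributes at most... the point is one shows $\alpha=k+1$: take $a$ and the non-edge $\{y^k_t z^k_t\}$-style vertices. I would settle the exact optimal stable set by induction on $k$ using the recursive block structure, and similarly for $Y$ add $w_1$ (a subdivision vertex near $b$), which is adjacent only to $b$'s old neighbors in one simplex, gaining one more independent vertex, hence $k+2$.

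For the upper bounds, the key tool is the recursive/link structure. I would argue that any stable set $S$ can intersect $\{a\}\cup X_1$ in a way that ``blocks'' at most a bounded amount, then peel off blocks one at a time: intuitively $S\cap(X_i\cup X_{i+1})$ is constrained because the positive/negative pairing between consecutive blocks means a non-edge pair chosen in $X_i$ forces adjacency into most of $X_{i+1}$, so $S$ can contain at most roughly one ``unit'' per block, giving $\le k+O(1)$; then pin down the additive constant by handling the two ends ($a$, $b$, and the subdivision vertices $u_j$, $w_j$) explicitly. The stellar subdivision at an edge $xy$ replaces the edge by a new vertex $v_{xy}$ adjacent exactly to $N(x)\cap N(y)\cup\{x,y\}$, and crucially $x,y$ become non-adjacent; so each subdivision can raise $\alpha$ by at most $1$, and one checks it raises it by exactly $0$ for the $X$-subdivisions (because $a$ is already only attached to $X_1$ and the new vertices are mutually adjacent and dominated) and by exactly... for $Y$ the first subdivision gains $1$ and the rest gain $0$.

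\textbf{Main obstacle.} The hard part is the upper bound, specifically showing that the ``staircase'' sign pattern between consecutive octahedral blocks genuinely forces $|S\cap X_i|$ to be small \emph{simultaneously} for all $i$ — a naive per-block bound double-counts, so one needs a global exchange/induction argument (delete $X_1$ and $a$, subdivisions and all, observe the rest is $X(d,k-1,\cdot)$ up to the end-modifications, and show $|S\cap(\{a,u_1,\dots,u_{d-1}\}\cup X_1)|$ exceeds the ``one unit'' budget by at most the known constant). Keeping track of how the $2d-2$ edge subdivisions at each end interact with stable sets — in particular that the new vertices $u_1,\dots,u_{d-1}$ are pairwise adjacent (they share the common neighborhood structure) so at most one lies in $S$ — is the fiddly bookkeeping I expect to consume most of the proof.
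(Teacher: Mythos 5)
Your overall skeleton matches the paper's: the arithmetic identities follow from counting vertices, the lower bound needs an explicit stable set, and the upper bound rests on the facts that consecutive blocks interact via the positive/negative pairing and that $U\cup\{a\}$ and $W\cup\{b\}$ are cliques. But both halves are left genuinely incomplete. On the lower bound, you never produce a valid stable set: your concrete attempts (e.g.\ $z_{d-1}^i$ in every block) fail for the reason you yourself notice, and you end by deferring to an unspecified induction. The construction that works is $S'=\bigcup_{i\ \mathrm{odd}}\{y_1^i,z_1^i\}$ -- taking a full non-edge from \emph{every other} block, which is stable because $X_i$ is anticomplete to $X_{i+2}$ -- plus $b$ when $k$ is even; this has size $k+1$. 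For $Y(d,k,j)$ with $j\ge 1$ the extra point comes from $\{a,w_1\}$ together with a single alternating vertex ($y_1^i$ or $z_1^i$) from \emph{each} block: the crucial observation, which your sketch omits, is that after the $d-1$ subdivisions at $ay_1^1,\dots,ay_{d-1}^1$ the vertex $a$ is no longer adjacent to any $y_t^1$, so $a$ can coexist in $S$ with a vertex of $X_1$. Mentioning $w_1$ alone does not get you to $k+2$.

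On the upper bound, your worry about ``double-counting'' is unfounded and leads you toward an unnecessarily heavy induction on $k$. The clean statement is $|S\cap(X_i\cup X_{i+1})|\le 2$ for every $i$ (because every vertex of $X_{i+1}$ has a neighbour in \emph{every} non-edge $\{y_s^i,z_s^i\}$ of $X_i$, and vice versa); applying this to the consecutive pairs $(X_1,X_2),(X_3,X_4),\dots$ immediately gives $|S\cap(X_1\cup\dots\cup X_k)|\le k+1$ with no double-counting. What then remains -- and what you explicitly defer as ``fiddly bookkeeping'' -- is the actual case analysis on $|S\setminus(U\cup W\cup\{a,b\})|\in\{k+1,k,<k\}$, using that $U\cup\{a\}$ and $W\cup\{b\}$ are cliques and that $\alpha(G[X_1\setminus N(v)])\le 1$ for every $v\in U\cup\{a\}$ (so two independent vertices in $X_1$ exclude all of $U\cup\{a\}$ from $S$). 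That case analysis is the substance of the paper's upper-bound proof, not an afterthought. Also, your step-by-step claim that each edge subdivision raises $\alpha$ by a verified amount ($0$ or $1$) is not independent evidence: verifying those increments \emph{is} the upper bound, so as written that part of the plan is circular.
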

\begin{proof}

  Let $G$ be one of the graphs $X(d,k,j)$ or $Y(d,k,j)$. Let
  $U$ be the set of vertices of the form $u_j$ in $G$, and
  let $W$ be the set of vertices of the form $w_j$ in $G$.
  Then $W \neq \emptyset$ only if $|U|=d-1$. Moreover $U \cup a$
  and $W \cup b$ are both cliques in $G$.
  Denote by $N_G(v)$ the neighbors of $v$ in $G$. Then, $X_1 \setminus N_G(a) \subseteq \{y_1^1, \dots, y_{d-1}^1\}$,
  and for every $j$ we have that
  $X_1 \setminus N_G(u_j) = \{y_1^1, \dots, y_{j-1}^1, z_{j}^1\}$.
  In particular, $\alpha(G[X_1 \setminus N_G(v)]) \leq 1$ for
  every $v \in U \cup \{a\}$.
  Similarly, $\alpha(G[X_k \setminus N_G(v)]) \leq 1$ for
  every $v \in W \cup \{b\}$.

  Let $S$ be a stable set of $G$. First we prove an upper bound on $|S|$.
  Clearly for every $i$ we have that $\alpha(G[X_i])=2$. Moreover every
  vertex
  of $X_{i+1}$ has a neighbor in every non-edge of $G[X_i]$, and every
  vertex of
  $X_i$ has a neighbor in every non-edge of $G[X_{i+1}]$.
Consequently,   $|S \cap (X_i \cup X_{i+1})| \leq 2$.

  Hence $|S \setminus (U \cup W \cup \{a,b\})| \leq k+1$.
  Suppose $|S \setminus (U \cup W \cup \{a,b\})| = k+1$. Then $k$ is odd, and
  $|S \cap X_1|=|S \cap X_k|=2$.
  It follows that  $S \cap (U \cup W \cup \{a,b\})=\emptyset$ and
  $|S|= k+1$.

Next suppose that   $|S \setminus (U \cup W \cup \{a,b\})| = k$.
Since $U\cup\{a\}$ and $W\cup\{b\}$ are both cliques, it follows that $|S| \leq k+2$,
and so we may assume that $G=X(d,k,j)$ for some $j$ (for otherwise $G=Y(d,k,j)$ and the
upper bound on $\alpha(G)$ holds).
%%%%%%%%%%%%%%
%With some extra work, of similar flavor, one shows that
%$|S| \leq k+1$ in this case, since $W=\emptyset$ in this case.
%%%%%%%%%%%%%%%%

In particular $W=\emptyset$ and $b$ is
adjacent to every vertex of $X_k$.
Since  $|S \setminus (U \cup W \cup \{a,b\})| = k$, it follows that
$|S \cap (X_1 \cup X_k)|=2$.
If $|S \cap X_k| \neq \emptyset$, then $b \not \in S$,
and, since $U \cup \{a\}$ is a clique, we have that $|S| \leq k+1$.
Thus we may assume that $S \cap X_k = \emptyset$, and
so $|S \cap X_1|=2$. Since $\alpha(G[X_1 \setminus N(v)]) \leq 1$ for
every $v \in U \cup \{a\}$, we deduce that $S \cap (U \cup \{a\})=\emptyset$,
and so $|S|=k$ if $b\notin S$ and $|S|=k+1$ if $b\in S$.

Clearly if $|S \setminus \{a,b\}| < k$ then,
since $U \cup \{a\}$ and $W \cup \{b\}$ are both cliques,
we have that $|S| \leq k+1$.
Thus in all cases the upper bound on $|S|$ holds.

Next we show that if $G=X(d,k,j)$ for some $j \geq 0$ then   $\alpha(G)=k+1$.
Let
$S'=\bigcup_{i \in {1, \dots, k};\  i \text{ odd}}\{y_1^i,z_1^i\}$.
If $k$ is odd let $S=S'$. If $k$ is even, let
$S=S' \cup \{b\}$. In both cases $|S|=k+1$.

Finally we show that  if $G=Y(d,k,j)$ for some $j \geq 1 $  then
$\alpha(G)=k+2$. Since $j\geq 1$, we have that $a$ is anticomplete to
$\{y_1^1, \dots, y_{d-1}^1\}$
and $w_1 \in W$. Let
$$S=\{a,w_1\} \cup \bigcup_{i \in \{1, \dots, k\};\  k-i \text{ odd}}\{y_1^i\} \cup
\bigcup_{i \in \{1, \dots, k\};\  k-i \text{ even}} \{z_1^i\}.$$
Then $S$  a stable set of size $k+2$ in $G$.

So far we have proved that $\alpha(X(d,k,j))=k+1$
for every $d \geq 2, k \geq 1$ and $d-1\geq j \geq 0$,
and  that $\alpha(Y(d,k,j))=k+2$
for every $d \geq 3, k \geq 1$ and $d-1\geq j \geq 1$.
The remaining equalities follow by a direct computation.
\end{proof}

Observe that $W_{d,1}$ is the $1$-skeleton of the $d$-dimensional
crosspolytope. Further,
%\begin{example}
%$W(3,2)$ is the $1$-skeleton of the icosahedron.
%\end{example}

\begin{observation}\label{flag2dim}
  The clique complex of $W_{3,k}$ is a flag $2$-sphere for every
  $k \geq 1$.
\end{observation}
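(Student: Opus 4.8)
Since a clique complex is always a flag complex, the only thing to be shown is that $\cl(W_{3,k})$ triangulates the $2$-sphere. The plan is to read off all the faces of $\cl(W_{3,k})$ explicitly, recognize the complex as a triangulated cylinder $S^1\times[0,1]$ capped off by a disk at each end, and then conclude via the standard combinatorial criterion for a triangulated closed surface together with an Euler-characteristic count.

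To set this up, put $d=3$, so each $X_i$ has four vertices; since $W_{3,k}[X_i]$ is the octahedral $1$-sphere with non-edges $y_1^iz_1^i$ and $y_2^iz_2^i$, it is the $4$-cycle $y_1^i\,y_2^i\,z_1^i\,z_2^i$, whose vertices I relabel in this cyclic order as $v_1^i,v_2^i,v_3^i,v_4^i$ (subscripts read modulo $4$). Unwinding the positive/negative convention for the four pairs $(s,t)\in\{1,2\}^2$, the $8$ edges between $X_i$ and $X_{i+1}$ come out to be exactly $v_p^iv_p^{i+1}$ and $v_p^iv_{p+1}^{i+1}$ for $p\in\{1,2,3,4\}$; these are the edges of the standard triangulated annulus between the two $4$-cycles, with triangles $v_p^iv_{p+1}^iv_{p+1}^{i+1}$ and $v_p^iv_p^{i+1}v_{p+1}^{i+1}$. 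Since $X_i$ is anticomplete to $X_j$ for $|i-j|>1$, $a$ is complete to $X_1$ only, $b$ to $X_k$ only, and $ab$ is a non-edge, a short inspection of neighbourhoods shows that the cliques of $W_{3,k}$ of size at least $3$ are precisely the $8(k-1)$ annulus triangles above, the four triangles $a\,v_p^1\,v_{p+1}^1$, and the four triangles $b\,v_p^k\,v_{p+1}^k$. In particular there is no $K_4$, so $\cl(W_{3,k})$ is pure $2$-dimensional, with $2+4k$ vertices, $12k$ edges and $8k$ triangles, hence $\chi(\cl(W_{3,k}))=2$.

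It then remains to verify the two local conditions. First, every edge lies in exactly two triangles: for example an edge $v_p^iv_{p+1}^i$ inside a layer lies in $v_p^{i-1}v_p^iv_{p+1}^i$ and in $v_p^iv_{p+1}^iv_{p+1}^{i+1}$, where for $i=1$ (resp. $i=k$) the vertex $a$ (resp. $b$) takes the place of the absent layer, and the vertical, diagonal, and $a$- or $b$-incident edges are treated the same way. Second, the link of every vertex is a single cycle: $\lk(a)$ and $\lk(b)$ are the $4$-cycles on $X_1$, resp. $X_k$; the link of a boundary-layer vertex $v_p^1$ is the $5$-cycle $a,\,v_{p-1}^1,\,v_p^2,\,v_{p+1}^2,\,v_{p+1}^1$ (which degenerates to the $4$-cycle $a,\,v_{p-1}^1,\,b,\,v_{p+1}^1$ when $k=1$, recovering the octahedron); and the link of an interior vertex $v_p^i$ with $1<i<k$ is the $6$-cycle $v_{p-1}^{i-1},\,v_{p-1}^i,\,v_p^{i+1},\,v_{p+1}^{i+1},\,v_{p+1}^i,\,v_p^{i-1}$. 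Since $\cl(W_{3,k})$ is connected, these conditions make it a closed surface, and $\chi=2$ then forces it to be $S^2$ by the classification of surfaces.

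The only genuinely laborious step is the bookkeeping: converting the positive/negative edge convention between consecutive layers into the clean description above, ruling out spurious cliques, and carrying out the link computation uniformly across the interior case $1<i<k$, the boundary cases $i\in\{1,k\}$, and the degenerate case $k=1$. No individual step is conceptually hard once the ``cylinder with two caps'' picture is in place.
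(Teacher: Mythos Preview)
Your argument is correct and follows essentially the same route as the paper: both identify each $W_{3,k}[X_i]$ as a $4$-cycle, recognize that the edges between $X_i$ and $X_{i+1}$ form a triangulated annulus, and cap the resulting cylinder with cones over $a$ and $b$. The paper simply asserts this picture, whereas you carry out the formal verification via the combinatorial surface criterion (links are cycles, every edge in two triangles) together with the Euler-characteristic computation; this extra rigor is welcome but not a different idea.
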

\begin{proof}
For each $i$, $W_{3,k}[X_i]$ is a $4$-cycle.
Consider $W_{3,k}[X_i\cup X_{i+1}]$:
adding to the two disjoint $4$-cycles $W_{3,k}[X_i]\cup W_{3,k}[X_{i+1}]$ the edges
$y_s^iy_s^{i+1}$ and $z_s^iz_s^{i+1}$
(for the positive pairs $(y_s^iz_s^i,y_s^{i+1}z_s^{i+1})$ with $s=1,2$) makes a cylinder subdivided into $4$ squares; adding the other edges
for the positive pair with $s=1,t=2$ and for the negative pair with $s=2,t=1$
subdivides each of the four squares into two triangles.
Thus, $W_{3,k}[X_1\cup\ldots\cup X_k]$ is a triangulated cylinder, and adding $a,b$ with their edges makes a flag $2$-sphere.
\end{proof}

 Next we show:
\begin{theorem}\label{alpha2dim}
For every $n\ge 6$, $\alpha(3,n)=\lceil\frac{n}{4}\rceil$.
\end{theorem}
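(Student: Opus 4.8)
The plan is to prove the theorem in two halves: the upper bound $\alpha(3,n)\le\lceil n/4\rceil$ by exhibiting, for each $n\ge 6$, a flag $2$-sphere on $n$ vertices whose independence number is exactly $\lceil n/4\rceil$, and the lower bound $\alpha(3,n)\ge\lceil n/4\rceil$ by showing every flag $2$-sphere $G$ on $n$ vertices has a stable set of that size. The upper bound is where the constructions $W_{3,k}$, $X(3,k,j)$, $Y(3,k,j)$ developed above do all the work; the lower bound is where the $4$-color theorem enters.

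\medskip

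\noindent\textbf{Upper bound.} By Observation~\ref{flag2dim} and the subdivision discussion, $\cl(W_{3,k})$, and hence each $\cl(X(3,k,j))$ and $\cl(Y(3,k,j))$ (stellar edge subdivisions preserve the PL-homeomorphism type, so they remain flag $2$-spheres --- flagness of an edge subdivision of a flag complex along an edge contained in a triangle is a standard fact, used implicitly here), are flag $2$-spheres. I would count vertices: $|V(W_{3,k})| = 4k+2$, and each edge subdivision adds one vertex, so $|V(X(3,k,j))| = 4k+2+j$ for $0\le j\le 2$ and $|V(Y(3,k,j))| = 4k+4+j$ for $0\le j\le 2$. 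Theorem~\ref{alpha} gives $\alpha(X(3,k,j)) = k+1$ and $\alpha(Y(3,k,j)) = k+2$. Now, for a target $n\ge 6$, write $n = 4k+2+r$; a short case analysis on $r\bmod$ the relevant residues picks out exactly one of these graphs realizing $n$ vertices with independence number $\lceil n/4\rceil$: e.g. $X(3,k,0)$ covers $n\equiv 2\pmod 4$ with $\alpha = k+1 = \lceil n/4\rceil$, $X(3,k,1)$ covers $n\equiv 3$, $X(3,k,2)$ covers $n\equiv 0$ (giving $k+1 = \lceil(4k+4)/4\rceil$), and $Y(3,k,1)$ covers $n\equiv 1\pmod 4$ (where $n=4k+5$, $\alpha=k+2=\lceil n/4\rceil$); small values of $n$ near $6$ are checked by hand. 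This yields $\alpha(3,n)\le\lceil n/4\rceil$ for all $n\ge 6$.

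\medskip

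\noindent\textbf{Lower bound.} Let $G$ be any graph with $\cl(G)$ a flag $2$-sphere on $n$ vertices. Then $G$ is a maximal planar graph (a triangulation of $S^2$), so by the $4$-color theorem its vertices can be properly colored with $4$ colors; the largest color class is a stable set of size at least $\lceil n/4\rceil$. Hence $\alpha(3,n)\ge\lceil n/4\rceil$. Combining the two bounds gives equality.

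\medskip

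\noindent\textbf{Main obstacle.} The delicate part is purely bookkeeping on the upper-bound side: verifying that for every residue of $n$ modulo $4$ (and for the boundary cases $n=6,7,8,9$) one of the listed graphs has exactly $n$ vertices \emph{and} that the value $\alpha = k+1$ or $k+2$ from Theorem~\ref{alpha} equals $\lceil n/4\rceil$ --- this requires matching the arithmetic expressions carefully but involves no new ideas. One should also double-check that the stellar edge subdivisions used to pass from $W_{3,k}$ to $X$ and $Y$ are performed on edges lying in a triangle (they are, being edges from the apex $a$ or $b$ to a vertex of an adjacent octahedral layer), so that flagness is preserved; this is the only topological point, and it is routine.
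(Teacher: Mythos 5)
Your proposal is correct and follows essentially the same route as the paper: the upper bound via the graphs $X(3,k,j)$ and $Y(3,k,j)$ together with Theorem~\ref{alpha} and Observation~\ref{flag2dim} (your residue-class bookkeeping matches the paper's observation that $|V(X(3,k,j))|\equiv_4 2+j$ and $|V(Y(3,k,j))|\equiv_4 j$), and the lower bound via the 4-color theorem applied to the planar triangulation $G$. Your explicit remark that stellar edge subdivision preserves flagness is a point the paper leaves implicit, but otherwise the arguments coincide.
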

\begin{proof}
  Observe that $|V(X(3,k,j))|\equiv_4 2+j$,
  and $|V(Y(3,k,j))|\equiv_4 j$, and thus
  for every $n \geq 6$ there exist integers $k \geq 1$ and $j \geq 0$ and a
  graph $G \in \{X(3,k,j), Y(3,k,j)\}$ such that $|V(G)|=n$.
  Now by Theorem~\ref{alpha}
  for every $k \geq 1$ and $j \geq 0$ we have that
  $\alpha(X(3,k,j))= \lceil\frac{|V(X(3,k,j))|}{4}\rceil$,
  and for every $k \geq 1$ and $j \geq 1$ we have that
  $\alpha(Y(3,k,j))= \lceil\frac{|V(Y(3,k,j))|}{4}\rceil$.
 Finally, since $X''(3,k,j)$ and $Y''(3,k,j)$ are obtained from
 $cl(W_{3,k})$ by stellar edge subdivisions, it follows from
 Observation~\ref{flag2dim} that
 their clique complexes are flag $2$-spheres.
  We have shown that for every $n \geq 6$, $\alpha(3,n)\leq \lceil\frac{n}{4}\rceil$.
  Since by the 4CT every $n$-vertex triangulation of the $2$-dimensional sphere has a stable set of size $\lceil\frac{n}{4}\rceil$,
  $\alpha(3,n) \geq \lceil\frac{n}{4}\rceil$.
  \end{proof}

For $d=4$, the graph $W_{4,k}$ induces a cell structure on the $3$-sphere, consisting of tetrahedra with a vertex $a$ or $b$ and of triangular prisms consisting of a triangle on $X_i$ and the
corresponding triangle on $X_{i+1}$ (the corresponding vertices differ only in the
superscript). All these triangular prisms are triangulated by considering all tertrahedra defined by cliques of $W_{4,k}$ on this
set of 6 vertices, except for the following two (for a fixed $1\leq i\leq k-1$): $y_1^i,z_2^i,y_3^i; y_1^{i+1},z_2^{i+1},y_3^{i+1}$  and its ``antipodal prism"
$z_1^{i},y_2^{i},z_3^{i}; z_1^{i+1},y_2^{i+1},z_3^{i+1}$. We add the edge
$y_1^iz_2^{i+1}$ to triangulate the first, and the edge $z_1^iy_2^{i+1}$
to triangulate the second (such added edge is ``bent" inside the prism, the resulted triangulation of the prism is topological, not geometric); denote the resulting graph by $W'_{4,k}$.
Let $X'(4,k,j)$ and $Y'(4,k,j)$ be the graphs obtained from $X(4,k,j)$ and
$Y(4,k,j)$, respectively,  by adding the same edges.
See Figure~\ref{Fig:2} for an illustration of how the triangular prisms are triangulated.
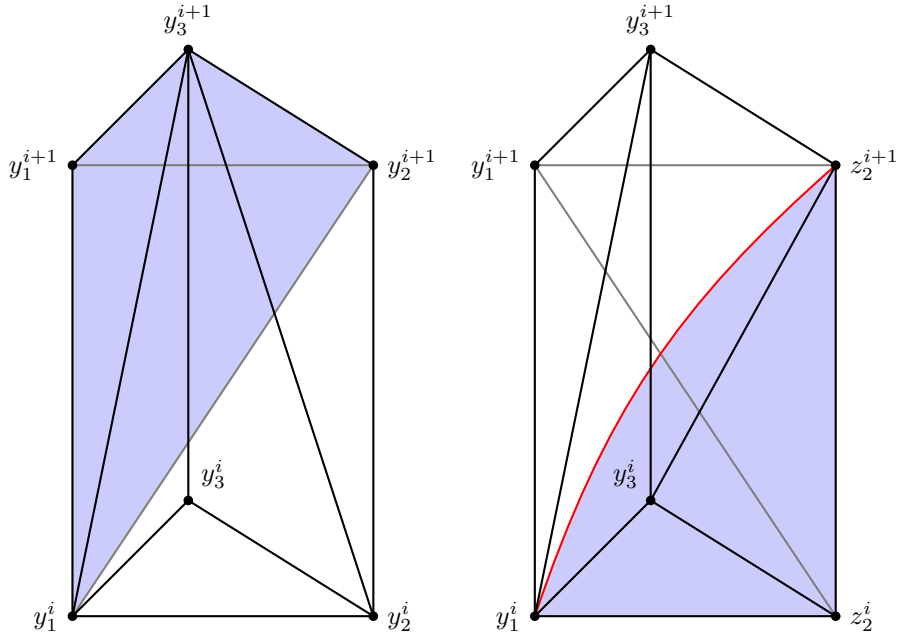
\begin{figure}[H]
\begin{minipage}{0.5\textwidth}
\begin{center}
	\begin{tikzpicture}%
	[
	scale=4,
	back/.style={gray,  thick},
	edge/.style={black, thick},
	vertex/.style={inner sep=1pt,circle,draw=black,fill=black,thick,anchor=base},
	facet/.style={fill=blue!95!black,fill opacity=0.2}
	]
	
%	%% draw coordinates
%	\coordinate (o) at (0, 0, 0);
%	\coordinate (x) at (1, 0, 0);
%	\coordinate (y) at (0, 1, 0);
%	\coordinate (z) at (0, 0, 1);
%	%% draw standard basis vectors
%	\node[vertex,label=right:{$x$}] at (x)     {};
%	\node[vertex,label=above:{$y$}] at (y)     {};
%	\node[vertex,label=below left:{$z$}] at (z)     {};
%	\draw[edge,red] (o) -- (x);
%	\draw[edge,green] (o) -- (y);
%	\draw[edge,blue] (o) -- (z);
	
	%% vertex locations
	\coordinate (t1) at (-1, 0.5, 0);
	\coordinate (t2) at (0, 0.5, 0);
	\coordinate (t3) at (-1, 0.5, -1);
		
	\coordinate (b1) at (-1, -1, 0);
	\coordinate (b2) at (0, -1, 0);
	\coordinate (b3) at (-1, -1, -1);
	
	%% facets before edges before vertices so they are "behind"
	\fill[facet] (b1) -- (t1) -- (t3) -- (t2) -- cycle {};
	%\fill[facet] (b1) -- (b3) -- (t3) -- (b2) -- cycle {};
	%\fill[facet] (b1) -- (b2) -- (t2) -- (t3) -- cycle {};
	
	%% back edges first
	\draw[edge,back] (t2) -- (b1);
	%% top triangle
	\draw[edge,back] (t1) -- (t2);
	\draw[edge] (t1) -- (t3);
	\draw[edge] (t2) -- (t3);
	
	%% bottom triangle
	\draw[edge] (b1) -- (b2);
	\draw[edge] (b1) -- (b3);
	\draw[edge] (b2) -- (b3);
		
	%% edges between top and bottom layer
	\draw[edge] (t1) -- (b1);
	\draw[edge] (t2) -- (b2);
	\draw[edge] (t3) -- (b3);
	
	\draw[edge] (t3) -- (b1);
	\draw[edge] (t3) -- (b2);

	%% draw vertices
	\node[vertex,label=left:{$y_1^{i+1}$}] at (t1)     {};
	\node[vertex,label=right:{$y_2^{i+1}$}] at (t2)     {};
	\node[vertex,label=above:{$y_3^{i+1}$}] at (t3)     {};
	
	\node[vertex,label=left:{$y_1^i$}] at (b1)     {};
	\node[vertex,label=right:{$y_2^i$}] at (b2)     {};
	\node[vertex,label=above right:{$y_3^i$}] at (b3)     {};
	
	\end{tikzpicture}
\end{center}
\end{minipage}
\begin{minipage}{0.5\textwidth}
\begin{center}
	\begin{tikzpicture}%
	[
	scale=4,
	back/.style={gray, thick},
	edge/.style={black, thick},
	vertex/.style={inner sep=1pt,circle,draw=black,fill=black,thick,anchor=base},
	facet/.style={fill=blue!95!black,fill opacity=0.2}
	]
	
	%	%% draw coordinates
	%	\coordinate (o) at (0, 0, 0);
	%	\coordinate (x) at (1, 0, 0);
	%	\coordinate (y) at (0, 1, 0);
	%	\coordinate (z) at (0, 0, 1);
	%	%% draw standard basis vectors
	%	\node[vertex,label=right:{$x$}] at (x)     {};
	%	\node[vertex,label=above:{$y$}] at (y)     {};
	%	\node[vertex,label=below left:{$z$}] at (z)     {};
	%	\draw[edge,red] (o) -- (x);
	%	\draw[edge,green] (o) -- (y);
	%	\draw[edge,blue] (o) -- (z);
	
	%% vertex locations
	\coordinate (t1) at (-1, 0.5, 0);
	\coordinate (t2) at (0, 0.5, 0);
	\coordinate (t3) at (-1, 0.5, -1);
	
	\coordinate (b1) at (-1, -1, 0);
	\coordinate (b2) at (0, -1, 0);
	\coordinate (b3) at (-1, -1, -1);
	
	%% facets before edges before vertices so they are "behind"
	\fill[facet] (b1) to (b2) to (t2) [bend right=15]to (b1);
		
	%% back edges first
	\draw[edge,back] (t1) -- (b2);
	\draw[edge, red, bend right=15] (t2) to (b1);
	%% top triangle
	\draw[edge,back] (t1) -- (t2);
	\draw[edge] (t1) -- (t3);
	\draw[edge] (t2) -- (t3);
	
	%% bottom triangle
	\draw[edge] (b1) -- (b2);
	\draw[edge] (b1) -- (b3);
	\draw[edge] (b2) -- (b3);
	
	%% edges between top and bottom layer
	\draw[edge] (t1) -- (b1);
	\draw[edge] (t2) -- (b2);
	\draw[edge] (t3) -- (b3);
	
	\draw[edge] (t3) -- (b1);
	\draw[edge] (t2) -- (b3);
			
	%% draw vertices
	\node[vertex,label=left:{$y_1^{i+1}$}] at (t1)     {};
	\node[vertex,label=right:{$z_2^{i+1}$}] at (t2)     {};
	\node[vertex,label=above:{$y_3^{i+1}$}] at (t3)     {};
	
	\node[vertex,label=left:{$y_1^i$}] at (b1)     {};
	\node[vertex,label=right:{$z_2^i$}] at (b2)     {};
	\node[vertex,label=above left:{$y_3^i$}] at (b3)     {};
	
	\end{tikzpicture}
\end{center}
\end{minipage}
\caption{Two triangular prisms with the induced graphs on their vertices. The grey edges indicate
edges not visible
from a front view of the depicted realization embeded in 3-space. The red edge is bent inside the right prism. In purple are sample induced tetrahedra. Note that in each prism, its clique complex triangulates it.}\label{Fig:2}
\end{figure}

%%%%%%%%%%%
\begin{theorem}\label{flag3dim}
  The clique complex of $W'_{4,k}$ is a flag $3$-sphere for every
  $k \geq 1$.
\end{theorem}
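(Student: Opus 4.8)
The plan is to verify two things: that $\cl(W'_{4,k})$ is a triangulated $3$-sphere, and that it is flag. Let me think about how to organize both.

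For the topological claim, the cleanest route is to build $\cl(W'_{4,k})$ up in pieces and glue. The combinatorial model suggested by the text is: $W'_{4,k}$ carries a cell decomposition of $S^3$ in which the cells are (a) two tetrahedral cones $a*\Delta$ and $b*\Delta$ over the top triangles $y_1^1y_2^1y_3^1$ and $y_1^ky_2^ky_3^k$ of the first and last crosspolytope, and (b) for each $i$ a collection of cells forming a "prismatic shell" between $X_i$ and $X_{i+1}$. The base case $k=1$: $W_{4,1}$ is the $1$-skeleton of the $4$-dimensional crosspolytope, and one checks directly (or cites the observation just before Observation~\ref{flag2dim}) that $\cl(W_{4,1})$ is the boundary of the crosspolytope, hence a flag $3$-sphere; passing to $W'_{4,1}$ only adds diagonals inside two of the eight prisms — but for $k=1$ there are no prisms between consecutive $X_i$'s, so $W'_{4,1}=W_{4,1}$ and we are done. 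For the inductive step I would describe $\cl(W_{4,k})$ restricted to $X_i\cup X_{i+1}$ as a triangulation of $S^2\times[0,1]$ whose two boundary spheres are the octahedral $2$-spheres $\cl(W_{4,k}[X_i])$ and $\cl(W_{4,k}[X_{i+1}])$, analogous to the "triangulated cylinder" in Observation~\ref{flag2dim}; then $\cl(W_{4,k})$ is obtained by stacking these shells and capping the two ends with the tetrahedra on $a$ and $b$, giving $S^3$. The modification to $W'_{4,k}$ retriangulates the two exceptional prisms in each shell: one must check that the cliques of $W'_{4,k}$ on the $6$ vertices of each such prism do triangulate the solid prism (this is exactly the content of Figure~\ref{Fig:2}), that no new clique is created spanning vertices of non-adjacent $X_i,X_j$ (clear, since the only added edges are within an $X_i\cup X_{i+1}$), and that the retriangulation agrees with the neighbors on the shared triangular faces (it does, since we only add an interior diagonal, not touching the boundary triangles $y_*^i$-triangle and $y_*^{i+1}$-triangle). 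Hence $\cl(W'_{4,k})$ is still a $3$-sphere.

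For flagness, recall a simplicial complex that triangulates a manifold is flag iff its minimal non-faces all have size $2$, equivalently: every clique in the graph is a face. So I must show that for $W'_{4,k}$, every set of pairwise-adjacent vertices spans a simplex. The key structural point is locality: a clique cannot use vertices from $X_i$ and $X_j$ with $|i-j|\ge 2$ (anticomplete), nor $a$ together with anything outside $X_1$, nor $b$ outside $X_k$. So any clique lies inside one of the "blocks" $\{a\}\cup X_1$, $X_i\cup X_{i+1}$, or $\{b\}\cup X_k$. For the blocks $\{a\}\cup X_1$ and $\{b\}\cup X_k$: $a$ is complete to $X_1$, and $\cl(W_{4,k}[X_1])$ is the octahedral $2$-sphere which is flag, so cliques there are faces; since $W'$ adds no edges incident to $a$ or $b$ and the exceptional-prism diagonals live strictly between consecutive layers (with $y_1^0$, etc., not existing), nothing changes at the ends. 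The real work is the middle blocks $X_i\cup X_{i+1}$: I need that $\cl(W'_{4,k}[X_i\cup X_{i+1}])$ is flag, i.e.\ every clique on these $12$ vertices is a simplex of the prismatic shell. Since $X_i$ and $X_{i+1}$ are each octahedral (flag), a clique $K$ restricts to a face $K\cap X_i$ (at most a triangle, avoiding some $y_s^iz_s^i$) and likewise $K\cap X_{i+1}$, and $K$ is a face iff the "bipartite part" of $K$ between the two triangles is consistent with the chosen prism triangulation. Because $|X_i\cup X_{i+1}|=12$ is bounded, this is a finite check; I would phrase it by fixing the two triangular faces $T\subseteq X_i$, $T'\subseteq X_{i+1}$ and the sign data, and verifying in each of the (finitely many, up to symmetry) prisms — the generic ones triangulated by all induced tetrahedra, and the two exceptional ones with the bent diagonal — that every complete bipartite-augmented set is a tetrahedron of the triangulation. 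The symmetry $y\leftrightarrow z$ (the antipodal map of each crosspolytope) and the ordering symmetry cut the number of genuinely distinct cases down substantially.

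The main obstacle is exactly this last finite verification for the exceptional prisms in $W'_{4,k}$: one has to be careful that adding the bent edge $y_1^iz_2^{i+1}$ (and antipodally $z_1^iy_2^{i+1}$) does not create a clique that fails to bound a simplex — e.g.\ checking that $\{y_1^i,z_2^{i+1}\}$ together with common neighbors does not form a $4$-clique whose fourth potential vertex would force a tetrahedron not present in the chosen topological triangulation of the prism. Everything else (locality of cliques, flagness of the octahedral ends, the sphere-gluing argument) is routine once set up; I expect the write-up to handle the exceptional prisms by an explicit small table or by the picture in Figure~\ref{Fig:2} together with a remark that the two non-faces removed and the two edges added are "antipodal" under the crosspolytope involution, so only one case needs to be checked by hand.
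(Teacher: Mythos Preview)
Your approach is essentially the paper's: both use the prismatic cell decomposition of $S^3$ described just before the theorem, localize every clique of $W'_{4,k}$ to a single $6$-vertex prism $V(T)$, and reduce the whole statement to the finite check that $\cl(W'_{4,k}[V(T)])$ triangulates $T$. The paper is a bit more economical in that it does not separate ``sphere'' from ``flag'': once one knows every clique sits inside some $V(T)$ and each $\cl(W'_{4,k}[V(T)])$ triangulates its prism, both conclusions follow simultaneously, whereas you run the two verifications in parallel. One small slip to fix in your write-up: the caps at $a$ and $b$ are not single tetrahedra over a ``top triangle'' $y_1^1y_2^1y_3^1$, but cones over the full octahedral $2$-spheres on $X_1$ and $X_k$ (eight tetrahedra each); you get this right later when handling flagness at the ends, so the error is cosmetic rather than structural.
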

\begin{proof}
Recall the cell structure on the $3$-sphere described above, by tetrahedra and triangular prisms, induced by $W_{4,k}$.
First observe that for every triangular prism $T$ on vertex set $V(T)$ and every added edge $uv=y_1^iz_2^{i+1}$ or $z_1^iy_2^{i+1}$ of $W'_{4,k}$ on vertices in $V(T)$, all cliques in $W'_{4,k}$ involving $uv$ have their vertex sets contained in $V(T)$. Further, every clique of $W'_{4,k}[X_1\cup\ldots\cup X_k]$ has its vertex set contained in $V(T)$ some triangular prism $T$.
Hence, to show that $\cl(W'_{4,k})$ is a flag $3$-sphere it is enough to check that every induced subcomplex $\cl(W'_{4,k}[V(T)])$ triangulates the prism $T$.
Clearly the squares in each prism $T$ are triangulated, as exactly one diagonal in each square is inserted (which diagonal depends on whether the corresponding pair is positive or negative).
One verifies that each triangle on the boundary of $T$ is contained in exactly one tetrahedron whose vertex set is contained in $V(T)$, and there is no 5-clique whose vertex set is contained in $V(T)$. Thus, to verify that $\cl(W'_{4,k}[V(T)])$ triangulates the prism $T$ it suffices to
check for each tetrahedron $A$ whose vertex set is contained in $V(T)$ that each triangle $B$ in $A$ and not in the boundary of $T$, satisfies that $B$ is contained in exactly one more tetrahedron $A'$ whose vertex set is contained in $V(T)$. One inspects that this is indeed the case.
\end{proof}

%\begin{theorem}\label{edges}
%  For $W_{d,k}$ we have $f_0=2(d-1)k+2$ and $f_1=(2d-3)f_0-2d(d-2)$.
%\end{theorem}

Next we show:

\begin{theorem}~\label{alpha3dim}
  For all $n \geq 8$,
 $\alpha(4,n)\le \lceil\frac{n+1}{6}\rceil$.
\end{theorem}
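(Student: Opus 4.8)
The plan is to mirror the proof of Theorem~\ref{alpha2dim}, replacing the two families $X(3,k,j),Y(3,k,j)$ by $X'(4,k,j),Y'(4,k,j)$ and Observation~\ref{flag2dim} by Theorem~\ref{flag3dim}. So the argument splits into an independence-number estimate, a covering argument for the possible values of $n$, and the verification that the relevant clique complexes are flag $3$-spheres.

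For the first part, $X'(4,k,j)$ has the same vertex set as $X(4,k,j)$ and more edges, so $\alpha(X'(4,k,j))\le\alpha(X(4,k,j))=k+1$ by Theorem~\ref{alpha}; likewise $\alpha(Y'(4,k,j))\le k+2$. From the equalities in Theorem~\ref{alpha} with $d=4$ one reads off $|V(X'(4,k,j))|=6k+2+j$ for $0\le j\le 3$ and $|V(Y'(4,k,j))|=6k+5+j$ for $1\le j\le 3$. A one-line check shows that if $n=6k+2+j$ with $0\le j\le 3$ then $\lceil\frac{n+1}{6}\rceil=k+1$, and if $n=6k+5+j$ with $1\le j\le 3$ then $\lceil\frac{n+1}{6}\rceil=k+2$; hence in either case the graph has independence number at most $\lceil\frac{n+1}{6}\rceil$. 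For the covering argument, for each fixed $k\ge 1$ the $X'$-family realizes $n\in\{6k+2,\dots,6k+5\}$ and the $Y'$-family realizes $n\in\{6k+6,6k+7,6k+8\}$, and these seven-term blocks for consecutive $k$ overlap (at $6k+8=6(k+1)+2$), so together they realize every $n\ge 8$.

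The substantive step is that each $\cl(X'(4,k,j))$ and $\cl(Y'(4,k,j))$ is a flag $3$-sphere. I would show that these complexes are obtained from $\cl(W'_{4,k})$ by a sequence of stellar edge subdivisions: first subdivide $ay_1^1,\dots,ay_j^1$ in order (each edge is still present when it is subdivided, since subdividing an edge only deletes that edge), producing $\cl(X'(4,k,j))$; then, starting from $\cl(X'(4,k,d-1))$, subdivide $by_1^k,\dots,by_j^k$, producing $\cl(Y'(4,k,j))$. One must check that the ``bent'' edges distinguishing $W'_{4,k}$ from $W_{4,k}$ play no role here: by the proof of Theorem~\ref{flag3dim} every clique of $W'_{4,k}$ containing a bent edge lies inside a single triangular prism, and no prism contains $a$ or $b$, so the open star of each subdivided edge is unaffected by the bent edges; hence the $1$-skeleton of the subdivided complex is exactly $X'(4,k,j)$, resp.\ $Y'(4,k,j)$, and the subdivided complex is its clique complex. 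Finally I invoke two facts: a stellar subdivision of a PL sphere is a PL sphere, and a stellar subdivision of a flag complex at an edge is again flag. Only the second needs an argument, and it is short: in $\Delta(uv)$ the new vertex $w$ is adjacent exactly to $u$, $v$ and the vertices of $\lk_{\Delta}(uv)$, while $uv$ is deleted, so any clique containing $w$ contains at most one of $u,v$ and meets $\lk_{\Delta}(uv)$ in a clique, which (by flagness of $\Delta$) is a face of $\lk_{\Delta}(uv)$; hence that clique spans a face of $w*\partial(uv)*\lk_{\Delta}(uv)\subseteq\Delta(uv)$, while every clique avoiding $w$ already spans a face of $\Delta$. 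Applying these two facts along the subdivision chain starting from the flag $3$-sphere $\cl(W'_{4,k})$ of Theorem~\ref{flag3dim} yields the claim.

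Combining the three parts produces, for every $n\ge 8$, an $n$-vertex graph $G$ with $\cl(G)$ a flag $3$-sphere and $\alpha(G)\le\lceil\frac{n+1}{6}\rceil$, which gives $\alpha(4,n)\le\lceil\frac{n+1}{6}\rceil$. I expect the flag-ness bookkeeping of the third paragraph to be the main obstacle — in particular being careful that each subdivided edge is still present at its turn and that the bent edges are genuinely irrelevant to the local picture of each subdivision; the counting and the $\alpha$-estimate are routine once Theorems~\ref{alpha} and~\ref{flag3dim} are in hand.
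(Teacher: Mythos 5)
Your proposal is correct and follows essentially the same route as the paper: realize every $n\ge 8$ by $X'(4,k,j)$ or $Y'(4,k,j)$, bound $\alpha$ via Theorem~\ref{alpha} since adding edges cannot increase $\alpha$, and observe that these complexes arise from $\cl(W'_{4,k})$ of Theorem~\ref{flag3dim} by stellar edge subdivisions. The only difference is that you spell out the bookkeeping (the bent edges not meeting the stars of $a,b$, and flagness being preserved under edge subdivision) that the paper leaves implicit; your arguments for these points are sound.
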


\begin{proof}
  Observe that $|V(X(4,k,j))|\equiv_6 2+j$,
  and $|V(Y(4,k,j))|\equiv_6 j-1$ (here $0\le j\le 3$), and thus
  for every $n \geq 8$ there exist integers $k \geq 1$ and $j \geq 0$ and a
  graph $G \in \{X(4,k,j), Y(4,k,j)\}$ such that $|V(G)|=n$.
  Now by Theorem~\ref{alpha}
  for every $k \geq 1$ and $j \geq 0$ we have that
  $\alpha(X(4,k,j))= \lceil\frac{|V(X(4,k,j))|+1}{6}\rceil$,
  and for every $k \geq 1$ and $j \geq 1$ we have that
  $\alpha(Y(4,k,j))= \lceil\frac{|V(Y(4,k,j))+1|}{6}\rceil$.
 Since
$X'(4,k,j)$ and $Y'(4,k,j)$ are obtained from
$X(4,k,j)$ and $Y(4,k,j)$ by adding edges,
we deduce that
$\alpha(X'(4,k,j))\leq \lceil\frac{|V(X(4,k,j))|+1}{6}\rceil=\lceil\frac{|V(X'(4,k,j))|+1}{6}\rceil$
for every $k \geq 1$ and $j \geq 0$, and
$\alpha(Y'(4,k,j))\leq \lceil\frac{|V(Y(4,k,j))|+1}{6}\rceil=\lceil\frac{|V(Y'(4,k,j))|+1}{6}\rceil$
 and for every $k \geq 1$ and $j \geq 1$.

 Finally, since $cl(X'(4,k,j))$ and $cl(Y'(4,k,j))$ are obtained from
 $cl(W'(4,k))$ by stellar edge subdivisions, it follows from
 Theorem~\ref{flag3dim} that
 their clique complexes are flag $3$-spheres.
 This completes the proof.
  \end{proof}

\begin{remark}
In fact, $\alpha(X'(4,k,j))= \lceil\frac{|V(X(4,k,j))|+1}{6}\rceil$
for every $k \geq 1$ and $j \geq 0$, and
$\alpha(Y'(4,k,j))= \lceil\frac{|V(Y(4,k,j))|+1}{6}\rceil$
 for every $k \geq 1$ and $j \geq 1$.
\end{remark}
Indeed, for $d=4$ the sets $S$ constructed in the proof of Theorem~\ref{alpha}
are also independent in $X'(4,k,j)$ and $Y'(4,k,j)$ resp.

Finally we prove the lower bound of Theorem~\ref{tmh:alpha_bounds}.
\begin{theorem}\label{thm:alpha_lower}
Let $d\ge 4$. Then for all $n\ge 2d$,
$$ \alpha(d,n) \geq \frac{1}{4} n^{\frac{1}{d-2}}$$
\end{theorem}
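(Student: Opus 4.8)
The plan is to prove the lower bound $\alpha(d,n) \ge \frac{1}{4} n^{1/(d-2)}$ by induction on $d$, exploiting the recursive structure of flag spheres: the link of any vertex in a flag $(d-1)$-sphere is a flag $(d-2)$-sphere. The base case is $d=4$, where Theorem~\ref{alpha2dim} (itself relying on the 4CT) gives $\alpha(3,m) = \lceil m/4 \rceil \ge m/4$ for all $m \ge 6$; this is exactly the $\frac14 m^{1/(d-2)}$ bound at $d=4$ since $1/(d-2) = 1/2$ would be weaker --- wait, more carefully: at $d=4$ we want $\alpha(4,n) \ge \frac14 n^{1/2}$, and the base case should really be $d=3$ or $d=4$ with a linear bound, so the induction gains a factor from passing through links. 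Let me restate: the base of the induction is $d=4$, using that $\alpha(4,n) \ge \frac14 n$ is far stronger than needed, but actually we only need $\alpha(4,n) \ge c\, n$ for some constant; from Theorem~\ref{alpha2dim} via a vertex link one gets $\alpha(4,n) \ge \frac14 \sqrt{n}$ or better. The cleanest setup: show by induction that $\alpha(d,n) \ge \frac14 n^{1/(d-2)}$ with base case $d=4$ giving $\alpha(4,n)\ge \frac14 n^{1/2}$, which follows because a flag $3$-sphere on $n$ vertices has a vertex of degree $\le$ (something linear), whose link is a flag $2$-sphere on $\le n$ vertices, but that only gives $\Omega(1)$; instead use: pick a vertex $v$, its non-neighborhood together with a maximum stable set in the link of $v$... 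Hmm, the right move is the standard Ramsey-type argument.

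Here is the argument I would actually run. Fix a flag $(d-1)$-sphere $G$ on $n$ vertices. Pick any vertex $v$. Its link $\lk_G(v)$ is a flag $(d-2)$-sphere on $\deg(v)$ vertices, so by induction it contains a stable set $T$ (in $G$, since $\lk_G(v) \subseteq N_G(v)$ and stability is inherited) with $|T| \ge \frac14 (\deg v)^{1/(d-3)}$ for $d \ge 5$. Meanwhile, the non-neighbors of $v$ form a set of size $n - 1 - \deg(v)$; but we cannot directly recurse on them. The correct dichotomy: either $G$ has a vertex of small degree --- specifically $\deg(v) \ge n^{(d-3)/(d-2)}$ for some $v$ would give via the link a stable set of size $\ge \frac14 n^{1/(d-2)}$ --- or every vertex has degree $< n^{(d-3)/(d-2)}$, in which case $G$ is sparse and a greedy/Turán argument yields a stable set of size $\ge \frac{n}{n^{(d-3)/(d-2)}} = n^{1/(d-2)}$, which is even better than needed (the constant $\frac14$ gives slack). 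So the proof splits on $\Delta(G)$ versus the threshold $n^{(d-3)/(d-2)}$: in the high-degree case apply the inductive hypothesis to a maximum-degree vertex's link; in the low-degree case apply Turán's theorem $\alpha(G) \ge n/(\Delta(G)+1)$.

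The base case $d=4$: every flag $3$-sphere $G$ on $n$ vertices either has a vertex $v$ with $\deg(v) \ge \sqrt{n}$, whose link is a flag $2$-sphere on $\ge \sqrt n$ vertices, hence by Theorem~\ref{alpha2dim} contains a stable set of size $\ge \lceil \sqrt{n}/4\rceil \ge \frac14 \sqrt n$; or $\Delta(G) < \sqrt n$, whence Turán gives $\alpha(G) \ge n/(\sqrt n + 1) \ge \frac14 \sqrt n$ (for $n \ge 2d = 8$, checking the small-$n$ boundary directly). This is where the 4CT enters and is responsible for the $\frac14$ improvement over the pure-Ramsey $\Omega(n^{1/d})$ claim. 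The main obstacle --- really the only delicate point --- is bookkeeping the exponents so the induction closes cleanly: one must verify that $(\deg v)^{1/(d-3)} \ge n^{1/(d-2)}$ when $\deg v \ge n^{(d-3)/(d-2)}$, which is immediate, and that in the sparse case $n/\Delta \ge \frac14 n^{1/(d-2)}$ with the chosen threshold, also immediate; the constant $\frac14$ is carried through untouched from the base case, and one should double-check the regime $n$ close to $2d$ and the integer rounding in $\lceil\cdot/4\rceil$ so that the inequality holds for all $n \ge 2d$ rather than just asymptotically.
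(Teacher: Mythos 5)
Your proposal is correct and is essentially the paper's own argument: the same induction on $d$ with base case $d=4$ via the 4CT applied to a vertex link, and the same dichotomy on whether some vertex has degree at least $n^{(d-3)/(d-2)}$ (recurse into its link) or all degrees are below that threshold (greedy/Tur\'an gives $\alpha \ge n/(\Delta+1)$, and the factor $\frac14$ absorbs the $+1$). No substantive difference to report.
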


\begin{proof}
  The proof is by induction on $d$. Let $\Delta$ be a $(d-1)$-flag sphere. Recall $\Delta$ has at least $2d$ vertices~\cite{Meshulam}, say it has $n$ vertices.

  For the base case let $d=4$. Then the link of $v$ in $\Delta$, denoted $\lk_v(\Delta)$, is a planar triangulation
  for every vertex $v$ of $\Delta$, and therefore, by  the 4CT,
  $\lk_v(\Delta)$ contains a stable set of size  $\lceil \frac{|V(\lk_v(\Delta))|}{4}\rceil$. Thus if for some vertex $v$ of $\Delta$ we
  have that $|V(\lk_v(\Delta))| \geq n^{\frac{1}{2}}$, then the theorem holds. If
  $|V(\lk_v(\Delta))| < n^{\frac{1}{2}}$ for every $v$, then a stable set
  of size $\frac{n}{ n^{\frac{1}{2}}}= n^{\frac{1}{2}}>\frac{1}{4} n^{\frac{1}{2}}$ can be obtained greedily.
  This finishes the case when $d=4$.

  Now we turn to general $d$.  In this case $\lk_v(\Delta)$ is a
  $(d-2)$-flag sphere   for every vertex   $v$ of $\Delta$,
  and therefore, inductively,
  $\lk_v(\Delta)$ contains a stable set of size
  $\frac{1}{4}|V(\lk_v(\Delta))|^\frac{1}{d-3}$.
    Thus if for some vertex $v$ of $\Delta$ we
  have that $|V(\lk_v(\Delta))| \geq n^{\frac{d-3}{d-2}}$, then the theorem holds. If
  $|V(\lk_v(\Delta))| < n^{\frac{d-3}{d-2}}$ for every $v$, then a stable set
  of size $\frac{n}{ n^{\frac{d-3}{d-2}}}= n^{\frac{1}{d-2}} > \frac{1}{4}n^{\frac{1}{d-2}}$ can be obtained greedily.
  This completes the proof.
  \end{proof}.

\iffalse
\begin{theorem} \label{C4}
  If $d>3$ the every edge of $W_{d,k}$ that is not incident with $a$ or
  $b$  in
  a $C_4$.
\end{theorem}

\begin{proof}
  UNFINISHED
  Clearly every edge with both ends in some $X_i$ is in a $C_4$ that is
  completely contained in $X_i$. By symmetry, it  remains to show that
  edges
  of the form $y_s^iy_t^{i+1}$, $y_s^iz_t^{i+1}$ and $z_s^iy_t^{i+1}$
  are in $C_4$'s.

  We consider the three cases separately.
\begin{enumerate}
\item {\bf An edge of the form  $y_s^iy_t^{i+1}$}. Then $t \geq s$.
    Suppose first that $s>1$.
  If there exists $l \in \{s, \dots, d-1\} \setminus \{t\}$, then
    $y_t^{i+1} \d  y_s^i \d z_{s-1}^i \d z_l^{i+1}-y_t^{i+1}$ is a
    $C_4$.
    It follows that $s=t=d-1$. But now  $y_{d-1}^{i+1} \d  y_{d-1}^i
    \d z_{d-2}^i \d y_{d-3}^{i+1}-y_{d-1}^{i+1}$ is a $C_4$. This
    proves that $s=1$.
\end{enumerate}
\end{proof}
\fi
%%%%%%%%%%%%%%%%%%%%%%%%%%%%%%%%%%%%%5
\section{Lower bounds on $f_1$}\label{sec:f1Turan}

The goal of this section is to prove Theorem~\ref{thm:flag-LBT-half}.
%
%\begin{theorem} \label{f1}
%  Let $d \geq 1$ be an integer and let $\Delta$ be a $(d-1)$-dimensional flag sphere. Then $f_1(\Delta) \geq \frac{\sqrt{2}+1}{2}df_0(\Delta)$.
%\end{theorem}

\begin{proof}
Let $\Delta=\cl(G)$ be a flag $(d-1)$-sphere on $n=f_0(\Delta)$ vertices and $f_1=f_1(\Delta)$ edges.
Let $\epsilon>0$, and assume $f_1<(d+\epsilon)n$.
We look for the largest $\epsilon=\epsilon(d)$ for which we reach a contradiction (when $d$ is chosen large enough, and then $n$ is chosen large enough w.r.t. $d$).

 % Suppose not, let $\Delta$ be a counterexample to \ref{f1}.
%  Write $f_0=f_0(\Delta)$ and $f_1=f_1(\Delta)$.
%  Then $f_1 < \frac{\sqrt{2}+1}{2} f_0$.

  By an easy  restatement of Tur\'an's theorem from \cite{AEKS}
  there is a stable set $I$ of $G$ with $|I| \geq \frac{n}{2(d+\epsilon)+1}$.

  We may assume $d\geq 4$. Then, we use the following well known facts: (i) $G$ is generically $d$-rigid, hence its space of stresses (a.k.a. affine $2$-stresses~\cite{Lee-stresses}) has dimension $g_2(\Delta):=f_1-dn+{{d+1} \choose {2}}$, see Kalai~\cite{Kalai:LBT}. (ii) For every vertex link, its graph is generically $(d-1)$-rigid and is not stacked
  (by flagness) hence, by the Cone Lemma, see e.g.~\cite[Cor.1.5]{TayWhiteWhiteley-skel1},
  for every vertex $v\in \Delta$ there exists a stress supported in the closed star of $v$ (namely in the induced graph of $G$ on $v$ and its neighbors) such that some edge containing $v$ has a nonzero weight.

   %  We will consider stresses in the face ring model, via the correspondence in Lee~\cite{Lee-stresses}.

  %The (second) cone lemma (see Adiprasito~\cite[Lem.3.3]{Adiprasito:toric},
  %and also the proof in Zheng~\cite[Prop.2.8]{Zheng:flag-survey}) shows that for every vertex $v\in G$, $g_1(\lk_v \Delta):=f_0(\lk_v (\Delta))-d$ equals the dimension of the degree $2$ part of the ideal $(x_v)$ in the ring  $\frac{\mathbb{R}[\Delta]}{(\Theta,\omega)}$, the face ring of $\Delta$ (over the reals) modulo a linear system of parameters $\Theta$ and
  %a Lefschetz element $\omega$ (yet another generic linear form).

  %Indeed, the following sequence with the obvious projection map is exact
  %$$ 0\rightarrow (x_v)\rightarrow \frac{\mathbb{R}[\Delta]}{(\Theta,\omega)}\rightarrow \frac{\mathbb{R}[\Delta \setminus v]}{(\Theta,\omega)}\rightarrow 0,$$
  %and $\dim_{\mathbb{R}}(x_v)_2=g_1(\lk_v\Delta)$ by the second cone lemma.

  %Also, by Lee~\cite{Lee-stresses}, $\dim_{\mathbb{R}}(\frac{\mathbb{R}[\Delta]}{(\Theta,\omega)})_2=g_2(\Delta)$.

  Now,
  %repeating an argument from the proof of~[Lem.4.2]\cite{Nevo-Missing},
  as $I$ is independent,
  %the sum of ideals $\sum_{v\in I}(x_v)$ in $\frac{\mathbb{R}[\Delta]}{(\Theta,\omega)}$ is direct, hence
the stresses mentioned above for $v\in I$ are linearly independent (each has a unique edge with a nonzero weight) and hence

  $$f_1 - dn+{{d+1} \choose {2}} \geq %\sum_{v\in I}(f_0(\lk_v \Delta)-d)
  %\geq |I|(d-2)
  |I|
  \geq \frac{n}
  {2(d+ \epsilon)+1},$$
  %\geq \frac{n (d-2)}
  %{2(d+ \epsilon)+1},$$
  %where the middle inequality follows as the octahedral sphere minimizes the number of vertices among all flag spheres of the same dimension (see e.g.~\cite[Thm.1.1]{Meshulam} or~\cite{Gal}).

Thus, for $n$ large enough w.r.t. $d$, we can ignore the ${{d+1} \choose {2}}$ term and get:  $\epsilon n > \frac{n}{2(d+\epsilon)+1}$,
%\frac{n (d-2)}{2(d+\epsilon)+1}$,
namely $\epsilon > \frac{1}{2(d+\epsilon)+1}$.
%$\epsilon > \frac{ (d-2)}{2(d+\epsilon)+1}$.

Solving the quadric for $\epsilon$ we get a contradiction if
$\epsilon <
\frac{-(2d+1)+\sqrt{(2d+1)^2 +8}}{4}$.
%\frac{-(2d+1)+\sqrt{(2d+3)^2-24}}{4}$.

Hence for arbitrarily small $\delta>0$, if $d$ is large enough we reach a contradiction for $\epsilon=\frac{1-\delta}{2d+1}$,
%$\epsilon=\frac{1}{2}-\delta$,
proving part (i).
For part (ii),
note that $\sqrt{x^2 +8}-x > \frac{3.95}{x}$ for $x\ge 13=2 \cdot 6 +1$,
%as $f(x)=\sqrt{x^2-24}-x$ is increasing over $x$ in the positive reals, then
thus for all $d\ge 6$ (and large enough $n$) we will reach a contradiction if
$\epsilon \le \frac{3.95}{4(2d+1)}=\frac{0.987}{2d+1}$.
%$\epsilon < \frac{-13+\sqrt{201}}{4}$,
%in particular for $\epsilon < 0.294$.
%
%
%
%  $$\frac{\sqrt{2}-1}{2}df_0 >  \frac{(d-1)(d-2)f_0}{2(\sqrt{2}+1)d}.$$
%
%  Therefore
%  $$d^2> d^2-3d+2,$$
%  a contradiction.
\end{proof}
Note that if Conjecture~\ref{conj:alpha} holds then plugging the larger value for $|I|$ yields
$f_1\ge (d+\frac{1}{2d-2})n$ for all $d\ge 6$ and large enough $n$.

\begin{conjecture}\label{conj:d+1-rigid}
For all $d\ge 5$, the graph of every flag $(d-1)$-sphere is $(d+1)$-rigid.
\end{conjecture}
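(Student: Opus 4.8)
The plan is to prove Conjecture~\ref{conj:d+1-rigid} by induction on $d$, reducing the whole statement to its first instance $d=5$, i.e.\ to the claim that the graph of every flag $4$-sphere is generically $6$-rigid.

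For the inductive step I would argue as follows. Let $d\ge 6$, assume the conjecture for $d-1$, and let $\Delta=\cl(G)$ be a flag $(d-1)$-sphere. For each vertex $v$, the link $\lk_v(\Delta)$ is a flag $(d-2)$-sphere, so by the inductive hypothesis its graph is generically $d$-rigid; since the closed star $\overline{\mathrm{st}}_v(\Delta)=v\ast\lk_v(\Delta)$ is a cone over the link, the Cone Lemma (e.g.~\cite[Cor.~1.5]{TayWhiteWhiteley-skel1}) gives that the graph of $\overline{\mathrm{st}}_v(\Delta)$ is generically $(d+1)$-rigid. I would then assemble $G=\bigcup_v G(\overline{\mathrm{st}}_v(\Delta))$ with the standard gluing lemma for generic rigidity: two generically $(d+1)$-rigid graphs that share at least $d+1$ vertices have a generically $(d+1)$-rigid union. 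Pick an ordering $v_1,\dots,v_n$ of the vertices in which each $v_i$ with $i\ge 2$ is adjacent to some earlier $v_j$ (possible since $G$ is connected); since $\Delta$ is flag, the common neighbours of the edge $v_iv_j$ are exactly the vertices of $\lk_{v_iv_j}(\Delta)$, a simplicial $(d-3)$-sphere and hence on at least $d-1$ vertices, so the closed stars of $v_i$ and $v_j$ share at least $(d-1)+2=d+1$ vertices. Adding the stars one at a time therefore keeps the union generically $(d+1)$-rigid, and after the last step the union is all of $G$. In particular this already establishes the conditional statement: \emph{if} the graph of every flag $4$-sphere is generically $6$-rigid, \emph{then} Conjecture~\ref{conj:d+1-rigid} holds.

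The main obstacle is the base case $d=5$, where this reduction genuinely breaks down: a vertex link is then only a flag $3$-sphere, and flag $3$-spheres need not be generically $5$-rigid --- for instance the boundary of the $4$-dimensional cross-polytope (a flag $3$-sphere on $8$ vertices) has only $24<25$ edges --- so coning never produces a generically $6$-rigid closed star, and no purely local, link-based argument can succeed. This is precisely why the conjecture starts at $d=5$, and a global argument is required. The route I would try is a Fogelsanger-type edge-contraction induction: Fogelsanger's method proves generic $d$-rigidity of the graph of any $(d-1)$-dimensional minimal cycle complex (in particular any $(d-1)$-sphere) by contracting an edge and inducting, and one would want to gain one extra dimension from flagness by running such a contraction induction inside a class of complexes that stays closed under the relevant moves while retaining enough of the flag property --- or a numerical surrogate, such as a controlled bound on the number of missing triangles a contraction can create --- with small flag $4$-spheres as the base (where one expects the cross-polytope to be among the extremal examples). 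A complementary approach would be to bound the dimension of the space of affine $3$-stresses of $G$ in $\mathbb{R}^{d+1}$ directly from the combinatorial structure, combined with the edge lower bound of Theorem~\ref{thm:flag-LBT-half}. Along the way one should also pin down the routine ingredients used above: the precise forms of the Cone Lemma and of the gluing lemma, and the elementary fact that in a flag complex the common neighbours of an edge are exactly the vertices of its link. I expect this base case to be where the real work --- and any genuinely new idea --- lies; it is the reason the statement is currently only a conjecture.
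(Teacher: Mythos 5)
The statement you were asked to prove is stated in the paper as a \emph{conjecture}, and the paper offers no proof of it; it only remarks that ``a standard use of the Cone and Gluing Lemmas \ldots reduces Conjecture~\ref{conj:d+1-rigid} to the case $d=5$'' and that the assertion is false for $d<5$. Your proposal does exactly this reduction, and does it correctly: links of vertices in a flag $(d-1)$-sphere are flag $(d-2)$-spheres, the inductive hypothesis plus the Cone Lemma makes each closed star generically $(d+1)$-rigid, and the gluing step goes through because the closed stars of two adjacent vertices $v_i,v_j$ share $v_i$, $v_j$ and the vertices of $\lk_{v_iv_j}(\Delta)$, a $(d-3)$-sphere on at least $d-1$ vertices, for a total of at least $d+1$ shared vertices. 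Your observation that the same argument cannot supply the base case $d=5$ --- because flag $3$-spheres such as the boundary of the $4$-crosspolytope are not generically $5$-rigid, so coning a link never yields a $6$-rigid star --- is also correct and is precisely why the conjecture starts at $d=5$.

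The genuine gap is the one you yourself identify: the base case $d=5$ is not proved, and the Fogelsanger-type contraction induction and the affine $3$-stress bound you sketch are directions, not arguments. So what you have is a correct conditional reduction (matching the paper's own remark verbatim in substance), not a proof of the conjecture. Since the paper leaves the statement open, there is nothing to compare your base case against; just be aware that submitting this as a ``proof'' would be wrong --- it establishes only that the conjecture for all $d\ge 5$ follows from its instance at $d=5$, which remains open.
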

If true, this conjecture would imply $f_1\ge (d+1)f_0-\binom{d+2}{2}$ for flag spheres of dimension $d-1 \ge 4$. A standard use of the Cone and Gluing Lemmas, see Kalai~\cite{Kalai:LBT}, reduces Conjecture~\ref{conj:d+1-rigid} to the case $d=5$. For $d<5$ its assertion is false.

\section{$\alpha_M(d,n)$}\label{sec:MaxStable}
Fix $d\ge 4$ and let $n \rightarrow \infty$. Then there exist simplicial $(d-1)$-spheres on $n$ vertices where the proportion of vertices in an independent set is arbitrarily close to $1$. To see this, start with the boundary complex $\Delta$ of a cyclic $d$-polytope with $m > d$ vertices, and note that $\Delta$ is a neighborly
$(d-1)$-sphere, i.e. all
$\binom{m}{\lfloor \frac{d}{2} \rfloor}$
subsets consisting of $\lfloor\frac{d}{2} \rfloor$ vertices are faces in $\Delta$. It is easy to check that $\Delta$ has
$\Theta(m^{\lfloor\frac{d}{2} \rfloor})$ facets. Perform stellar subdivisions on all facets. Then the set $I$ of the newly added
vertices is stable and of size $\Theta(m^{\lfloor\frac{d}{2} \rfloor})$ , while only the original $m$ vertices are not in $I$.

%neighborly $(d-1)$-sphere on $m$ vertices and stellar-subdivide all facets. The set $I$ of the new vertices is stable and of size $\Theta(m^{\lfloor d/2\rfloor})$ for large $m$, while only the old $m$ vertices are not in $I$.

In contrast, for flag spheres we conjecture that the proportion of vertices in an independent set can not exceed $1/2$.

\begin{conjecture}
For all $d\geq 2$, $\alpha_M(d,n)=\lfloor\frac{n-2(d-2)}{2}\rfloor$.
\end{conjecture}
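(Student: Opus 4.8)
The statement splits into a lower bound (a construction) and an upper bound (the substantive direction), and the plan is to treat them separately.

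For the lower bound $\alpha_M(d,n)\ge\lfloor\frac{n-2(d-2)}{2}\rfloor$ I would exhibit an explicit flag $(d-1)$-sphere: take the cycle $C_m$ with $m=n-2(d-2)$ (note $m\ge 4$, since a flag $(d-1)$-sphere has $n\ge 2d$ vertices) and suspend it $d-2$ times, i.e.\ form the join $S^0\ast\cdots\ast S^0\ast C_m$ with $d-2$ copies of $S^0=\{p_i,q_i\}$ (so $p_i\not\sim q_i$). The join of flag spheres is a flag sphere, so this is a flag $(d-1)$-sphere on $2(d-2)+m=n$ vertices; in it each $p_i$ (resp.\ $q_i$) is adjacent to every vertex except its own partner, so any stable set that meets the suspension vertices has size at most $2$, whence the maximum stable set is the alternating set of $C_m$, of size $\lfloor m/2\rfloor=\lfloor\frac{n-2(d-2)}{2}\rfloor$. (When $m=4$ this is the boundary of the $d$-dimensional crosspolytope, so the bound is already tight at $n=2d$.)

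For the upper bound the workhorse will be the bookkeeping identity: if $\Delta=\cl(G)$ is a flag $(d-1)$-sphere on $n$ vertices, $I$ a maximum stable set with $\alpha=|I|$, and $K:=\Delta[\bar I]$ the induced (again flag) subcomplex on $\bar I=V\setminus I$, then, because $I$ is stable so every face of $\Delta$ meets $I$ in at most one vertex,
\[
f_i(\Delta)=f_i(K)+\sum_{v\in I}f_{i-1}\bigl(\lk_v(\Delta)\bigr)\qquad(i\ge 1).
\]
Each $\lk_v(\Delta)$ is a flag $(d-2)$-sphere, hence has $\deg_\Delta(v)=f_0(\lk_v(\Delta))\ge 2(d-1)$ vertices, so $2(d-1)\alpha\le\sum_{v\in I}\deg_\Delta(v)$; thus it suffices to prove $\sum_{v\in I}\deg_\Delta(v)\le(d-1)\bigl(n-2(d-2)\bigr)$. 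For $d=2$ this is trivial, and for $d=3$ it follows at once: $\lk_v(\Delta)$ is a cycle, so $f_1(\lk_v(\Delta))=\deg_\Delta(v)$, and taking $i=2$ in the identity gives $\sum_{v\in I}\deg_\Delta(v)=f_2(\Delta)-f_2(K)\le f_2(\Delta)=2n-4$ since $\Delta$ is a $2$-sphere and $f_2(K)\ge 0$; hence $4\alpha\le 2n-4$, i.e.\ $\alpha\le\lfloor\frac{n-2}{2}\rfloor$, which is the claim for $d=3$. (For general $d$ the same identity with $\tilde\chi(\Delta)=(-1)^{d-1}$ and $\tilde\chi(\lk_v(\Delta))=(-1)^{d-2}$ also yields $\tilde\chi(K)=(1-\alpha)(-1)^{d-1}$, consistent with the fact that $K$ is $\Delta$ with the $\alpha$ pairwise-disjoint open vertex-stars of $I$ removed, so $K\simeq\bigvee_{\alpha-1}S^{d-2}$.)

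For $d\ge 4$ I would set up an induction on $d$: applying the conjecture to $\lk_v(\Delta)$ for each $v\in\bar I$ bounds $|N_G(v)\cap I|\le\alpha(\lk_v(\Delta))\le\frac12(\deg_\Delta(v)-2(d-3))$, and summing over $v\in\bar I$ (the left side being $\sum_{v\in I}\deg_\Delta(v)$) gives $\sum_{v\in I}\deg_\Delta(v)\le 2f_1(K)-2(d-3)(n-\alpha)$, equivalently $f_1(K)\ge(d-3)n+2\alpha$. This is exactly where the argument stalls, and it is the main obstacle: to conclude $\alpha\le\frac n2-(d-2)$ one needs a matching upper bound $f_1(K)\le(d-2)(n-2)$, but $K=\Delta[\bar I]$ can a priori have $\Theta(n^2)$ edges — flag spheres, and hence their vertex-deleted flag subcomplexes, need not be sparse (iterated joins of $4$-cycles, or suspensions of dense flag spheres, are nearly complete) — so no unconditional linear bound on $f_1(K)$ is available. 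The route that $d=3$ takes (reading off $\sum_{v\in I}\deg_\Delta(v)$ from a single Dehn--Sommerville-determined face number of $\Delta$) also breaks for $d\ge 4$, since neither $f_{d-1}(\Delta)$ nor the relation between $f_{d-2}(\lk_v(\Delta))$ and $\deg_\Delta(v)$ is then pinned down. What seems to be needed is new input coupling ``$\alpha$ large'' with ``$K$ sparse'': a self-improving bootstrap exploiting that $K$ is not an arbitrary flag complex but a punctured flag sphere $\bigvee_{\alpha-1}S^{d-2}$, or a rigidity/Lefschetz inequality specialised to the stable set $I$ in the spirit of the proof of Theorem~\ref{thm:flag-LBT-half}, or a flag Upper-Bound-type estimate local to $I$. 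Making any of these interlock with the Euler-characteristic bookkeeping so as to cancel the $f_1(K)$ term is, I expect, the crux, and its current absence is why the statement is posed only as a conjecture.
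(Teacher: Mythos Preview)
Your proposal is correct and aligns with what the paper actually establishes: the conjecture is not proved in full there either --- only the lower bound for all $d$ and the upper bound for $d=3$ (Theorem~\ref{thm:maxstable2dim}) --- and you correctly flag $d\ge 4$ as the genuine obstacle.

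For the lower bound you give exactly the paper's construction, the $(d-2)$-fold suspension over the $(n-2(d-2))$-gon.

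For the $d=3$ upper bound your argument is a genuinely different (and arguably cleaner) route to the same inequality $\sum_{v\in I}\deg(v)\le 2n-4$. The paper passes to the bipartite planar subgraph $G'$ on edge set $B=\{e:\ |e\cap I|=1\}$, notes $|B|=\sum_{v\in I}\deg(v)$, and invokes the bound $|E|\le 2|V|-4$ for bipartite planar graphs. You instead count triangles: since $I$ is stable, $f_2(\Delta)=f_2(K)+\sum_{v\in I}f_1(\lk_v(\Delta))$, and because each link is a cycle this reads $\sum_{v\in I}\deg(v)=f_2(\Delta)-f_2(K)\le f_2(\Delta)=2n-4$. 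Your version stays entirely inside the simplicial complex and needs only Euler's formula for the $2$-sphere, avoiding the separate bipartite-planar lemma; the paper's version has the advantage of being phrased purely graph-theoretically. Either way both feed into $4|I|\le 2n-4$, so the conclusions coincide.

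Your discussion of the $d\ge 4$ obstruction (the missing linear upper bound on $f_1(K)$) is accurate and goes beyond what the paper says; the paper simply states the conjecture and leaves $d\ge 4$ open.
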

%\eran{Can we prove it for all $d$?? Is it true that $1/2$ proportion for vertex links gives at most $1/2$ for the sphere, by average? Cauchy-Schwarts??}

This conjecture clearly holds for $d=2$ and we prove it for $d=3$.
The lower bound holds for all $d\ge 2$ by the following construction:
consider the $(d-2)$-fold suspension over the $(n-2(d-2))$-gon. A maximum stable set is obtained by taking every second  vertex along the $(n-2(d-2))$-gon.
\begin{theorem}\label{thm:maxstable2dim}
For all $n\geq 6$, $\alpha_M(3,n)=\lfloor\frac{n-2}{2}\rfloor$.
\end{theorem}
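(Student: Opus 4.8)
The plan is to establish the two inequalities $\alpha_M(3,n)\ge\lfloor\frac{n-2}{2}\rfloor$ and $\alpha_M(3,n)\le\lfloor\frac{n-2}{2}\rfloor$ separately: the first by a construction, the second by a short double-counting argument valid for every flag $2$-sphere.

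For the lower bound I would use the suspension $\Delta$ of the cycle $C_{n-2}$, i.e.\ the boundary complex of the $(n-2)$-gonal bipyramid; this is legitimate since $n\ge 6$ forces $n-2\ge 4$. It is visibly a simplicial $2$-sphere, and I would check flagness directly: the only candidate empty triangles are three equator vertices (ruled out because $C_{n-2}$ has no $3$-cycle for $n-2\ge 4$) or the two apices together with an equator vertex (ruled out because the apices are non-adjacent). Taking every other vertex along the equator $C_{n-2}$ gives a stable set of size $\lfloor\frac{n-2}{2}\rfloor$, so $\alpha$ of this graph is at least $\lfloor\frac{n-2}{2}\rfloor$, and hence so is $\alpha_M(3,n)$.

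For the upper bound, let $\Delta=\cl(G)$ be an arbitrary flag $2$-sphere on $n$ vertices and $S$ a stable set of $G$ with $|S|=s$. I would use two standard facts: (a) any triangulation of $S^2$ has exactly $2n-4$ triangles (Euler's formula together with $2f_1=3f_2$); (b) in a flag $2$-sphere every vertex $v$ has $\deg(v)\ge 4$, since $\lk_v(\Delta)$ is a cycle of length $\deg(v)$, and a $3$-cycle there would, by flagness, force the $3$-simplex $\{v\}\cup\lk_v(\Delta)$, which is impossible in dimension $2$. (This is the only place flagness is used; for general $2$-spheres one would only get $\deg(v)\ge 3$ and a weaker bound.) Now for each $v\in S$ the triangles of $\Delta$ containing $v$ are in bijection with the edges of $\lk_v(\Delta)$, so there are $\deg(v)\ge 4$ of them; and since $S$ is stable, no triangle of $\Delta$ contains two vertices of $S$, so these families of triangles, over $v\in S$, are pairwise disjoint. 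Hence $4s\le\sum_{v\in S}\deg(v)\le 2n-4$, giving $s\le\frac{n-2}{2}$, and since $s$ is an integer, $s\le\lfloor\frac{n-2}{2}\rfloor$. As this bounds $\alpha(G)$ for every flag $2$-sphere $G$ on $n$ vertices, we conclude $\alpha_M(3,n)\le\lfloor\frac{n-2}{2}\rfloor$.

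I do not anticipate a serious obstacle: the upper bound collapses to a one-line count once facts (a) and (b) are recorded, and the lower bound is an explicit construction. The only points requiring a little care are the flagness check for the suspension and verifying that the ``alternate equator vertices'' set really has size $\lfloor\frac{n-2}{2}\rfloor$ when $n-2$ is odd — which it does, since one then simply skips one of the two equator-neighbors of the starting vertex. Combining the two bounds incidentally shows that the constructed sphere attains $\alpha=\lfloor\frac{n-2}{2}\rfloor$ exactly.
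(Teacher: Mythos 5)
Your proposal is correct. The lower bound is exactly the paper's construction (the suspension of the $(n-2)$-gon with alternate equator vertices), and your flagness check is fine. The upper bound reaches the same key inequality $4|S|\le 2n-4$ as the paper, but by a genuinely different count: you count facets, observing that the $\deg(v)\ge 4$ triangles through each $v\in S$ form pairwise disjoint families inside the $f_2=2n-4$ triangles of the sphere. The paper instead forms the bipartite planar subgraph $G'$ of edges with exactly one end in $S$, notes $|E(G')|=\sum_{v\in S}\deg(v)\ge 4|S|$, and bounds $|E(G')|\le 2n-4$ using the edge bound for bipartite planar graphs (for which it must also verify that each side of the bipartition has at least two vertices, and it cites an external reference for the bound). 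Your triangle count is more self-contained: it needs only Euler's formula plus $2f_1=3f_2$ and avoids both the bipartite-planarity lemma and the side-size check. The one point you elide is that flagness of the bipyramid also requires that there be no $4$-clique in its graph (not just no empty triangle); this is immediate from your observation that no three equator vertices are mutually adjacent and the apices are non-adjacent, so it is a cosmetic omission rather than a gap.
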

\begin{proof}
The construction above proves the lower bound $\alpha_M(3,n)\geq \lfloor\frac{n-2}{2}\rfloor$.
To show $\alpha_M(3,n)\leq \lfloor\frac{n-2}{2}\rfloor$, let $I$ be a maximum stable set in the graph $G=(V,E)$ of a flag $2$-sphere on $n$ vertices (it forces $n\ge 6$). Let $G'=(V,B)$ be the subgraph of $G$ whose edges are those with exactly one vertex in $I$. Then $G'$ is bipartite and planar. Further, $G'$ has at least two vertices in $I$ (as each vertex in $G$ has a non-neighbor) and at least two (in fact $4$) vertices in the complement of $I$ (as each vertex in $I$ has degree at least $4$ by flagness). Thus, $G'$ has at most $2n-4$ edges (this is known, see e.g.~\cite[Lemmas 4.2, 4.3]{Kalai-Nevo-Novik} for a proof).
On the other hand,
$$|B|=\sum_{v\in I}\deg(v) \geq 4|I|,$$
as each vertex in $G$ has degree at least $4$, and for all $v\in I$ the degree is preserved when passing to $G'$. Thus $4|I|\leq 2n-4$, hence $|I|\leq \lfloor\frac{n-2}{2}\rfloor$.
\end{proof}

%\textbf{Acknowledgements.}
%We thank Daniel Kalmanovich for producing the figures, and Hailun Zheng for helpful comments on the presentation.
%
%\bibliographystyle{plain}
%\bibliography{biblioStable}

%\acknowledgements{
\textbf{Acknowledgements.}
We deeply thank Isabella Novik and and Hailun Zheng for spotting a false statement in our ``proof" of a stronger version of Thm.1.3,
 Daniel Kalmanovich for producing the figures, and the anonymous referees of FPSAC2022 for helpful comments that greatly improved the presentation. An extended abstract of this work will be presented at FPSAC2022~\cite{FPSAC2022}.
 %}

%% if you use biblatex then this generates the bibliography
%% if you use some other method then remove this and do it your own way

%\printbibliography
\bibliographystyle{plain}
\bibliography{biblioStable}

%\begin{thebibliography}{99}
%
%\bibitem{AEKS} M. Ajtai, P. Erd\"{o}s, J. Koml\"{o}s, E. Szemeredi,
%  On Tur\'an's theorem for sparse graphs, {em Combinatorica} {\bf 1} (1981),
%  313--317.
%
%  \end{thebibliography}

\end{document}